\DeclareMathOperator*{\argmin}{argmin}
\DeclareMathOperator*{\argmax}{argmax}
\newtheorem{prop}{Proposition}
\newtheorem{coro}{Corollary}
\newtheorem{obs}{\textbf{Observation}}
\newcommand{\round}[1]{\ensuremath{\left\lfloor#1\right\rceil}}
\DeclareRobustCommand{\rchi}{{\mathpalette\irchi\relax}}
\newcommand{\irchi}[2]{\raisebox{\depth}{$#1\chi$}} 
\newcommand{\rafaelC}[1]{\textcolor{black}{#1}}
\title{A matheuristic approach for the $b$-coloring problem using integer programming and a multi-start multi-greedy randomized metaheuristic}
\author{ Rafael A. Melo {\thanks{Universidade Federal da Bahia, Departamento de Ci\^{e}ncia da Computa\c{c}\~{a}o, Computational Intelligence and Optimization Research Lab (CInO), Salvador, Brazil.  ({\tt melo@dcc.ufba.br}). Corresponding author.}}
	\and Michell F. Queiroz {\thanks{Universidade Federal da Bahia, Departamento de Ci\^{e}ncia da Computa\c{c}\~{a}o, Computational Intelligence and Optimization Research Lab (CInO), Salvador, Brazil.  ({\tt michellfelippe@dcc.ufba.br}).}}
	\and Marcio C. Santos {\thanks{Universidade Federal do Ceará, Campus Russas. Rua Felipe Santiago, 411. Russas, CE 62900-000. Brazil. ({\tt marciocs@ufc.br}).}} 
       }
\begin{document}

\maketitle

\begin{abstract}
    Given a graph $G=(V,E)$, the $b$-coloring problem consists in attributing a color to every vertex in $V$ such that adjacent vertices receive different colors, every color has a $b$-vertex, and the number of colors is maximized. A $b$-vertex is a vertex adjacent to vertices colored with all used colors but its own. The $b$-coloring problem is known to be NP-Hard and its optimal solution determines the $b$-chromatic number of $G$, denoted $\rchi_b(G)$. This paper presents an integer programming formulation and a very effective multi-greedy randomized heuristic which can be used in a multi-start metaheuristic. In addition, a matheuristic approach is proposed combining the multi-start multi-greedy randomized metaheuristic with a MIP (mixed integer programming) based local search procedure using the integer programming formulation. Computational experiments establish the proposed multi-start metaheuristic as very effective in generating high quality solutions, along with the matheuristic approach successfully improving several of those results. Moreover, the computational results show that the multi-start metaheuristic outperforms a state-of-the-art hybrid evolutionary metaheuristic for a subset of the large instances which were previously considered in the literature. An additional contribution of this work is the proposal of a benchmark instance set, which consists of newly generated instances as well as others available in the literature for classical graph problems, with the aim of standardizing computational comparisons of approaches for the $b$-coloring problem in future works. \\
    
    \noindent \textbf{Keywords:} metaheuristics, graph $b$-coloring; integer programming; fix-and-optimize; matheuristics.
\end{abstract}

\section{Introduction}
\label{sec:introduction}

\subsection{Basic notation and problem definition}
Given a simple graph $G=(V,E)$ and a set of colors $K = \{1,\ldots,|K|\}$, define a \textit{coloring} $c: V \rightarrow K$ as a function which assigns to each vertex $v \in G$ a color $i \in K$. A coloring is said to be \textit{proper} if $c(u) \not= c(v)$ for every $uv \in E$. {An example of proper coloring is illustrated in Figure~\ref{fig:propercoloring}.}
\begin{figure}[ht]
\centering
\includegraphics[scale=0.30]{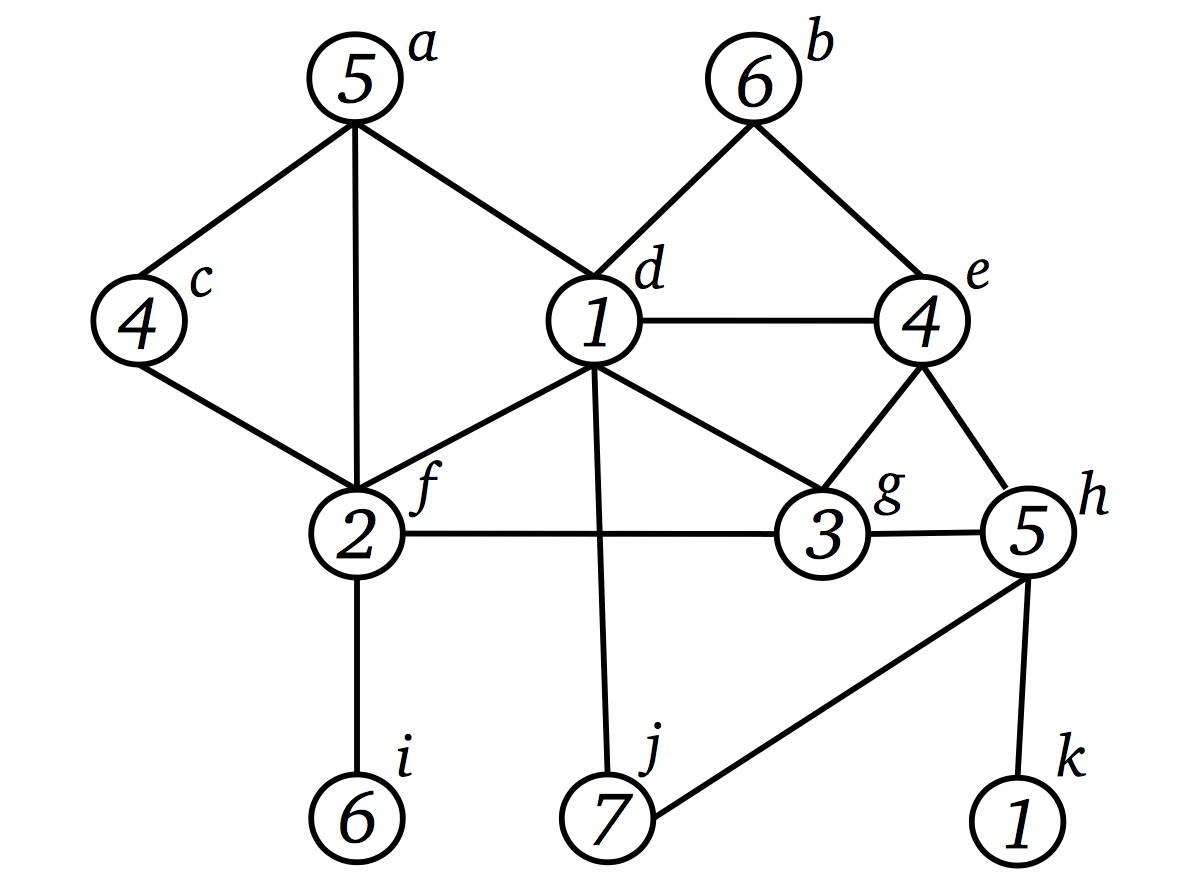}
\caption{Illustration of a proper coloring with seven colors.}
\label{fig:propercoloring}
\end{figure}

Given a coloring $c$, define $v$ to be a $b$-vertex if $v$ has at least one neighbor with each color in $K \setminus \{c(v)\}$, more precisely, $N(v) \cap \{u \in V \ | \ c(u) = i\} \not= \varnothing$ for every $i \in K\setminus \{c(v)\}$. A coloring is said to be a $b$-coloring if every color in $K$ has at least one associated $b$-vertex. Examples of $b$-colorings are illustrated in Figure~\ref{fig:bcoloring}.
Alternatively, define color classes of $c$ as the parts of a partition of $V$ into independent sets $C_{i} = \{v \in V \ | \ c(v) = i\}$ for each $i \in K$. A vertex $v \in V$ with $c(v) = j$ is called a \textit{b-vertex} for color $j$ if $v$ has a neighbor representing every other color class, i.e., $N(v) \cap C_{i} \neq \varnothing$ for all $i \in K \setminus \{j\}$. In this alternative definition, a $b$-coloring is a proper coloring such that every color class has a \textit{b-vertex}.
\begin{figure}[ht]
\centering
\subfigure[$b$-coloring with four colors in which the $b$-vertices are $d$ (color 1), $f$ (color 2), $h$ (color 2), $e$ (color 3), $a$ (color 4), and $g$ (color 4).]{
  \includegraphics[scale=0.30] {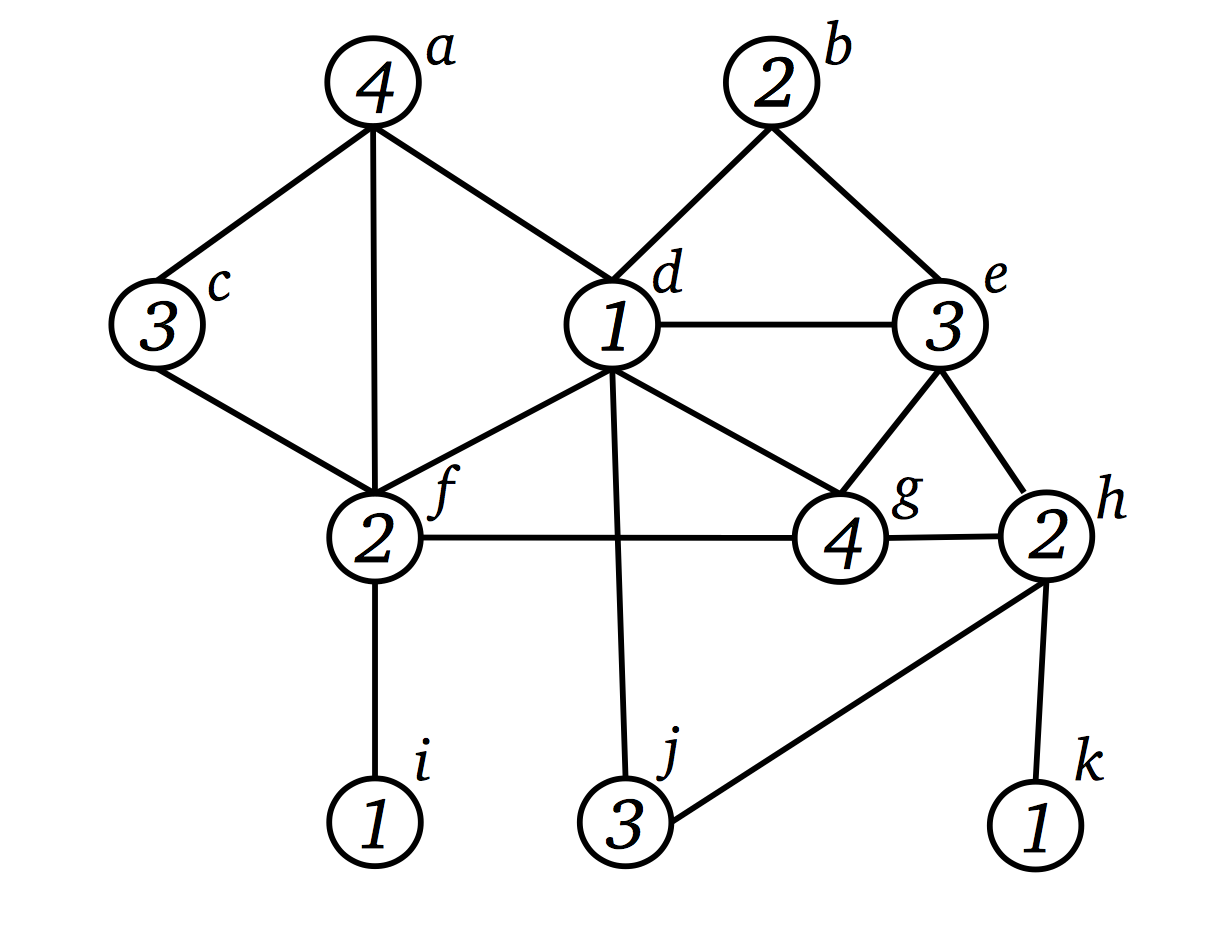}
  \label{fig:bcoloring4}
}
  \hspace{0.5cm}
  \subfigure[$b$-coloring with five colors in which the $b$-vertices are $d$ (color 1), $f$ (color 2), $g$ (color 3), $e$ (color 4), and $h$ (color 5).]{
   \includegraphics[scale=0.30] {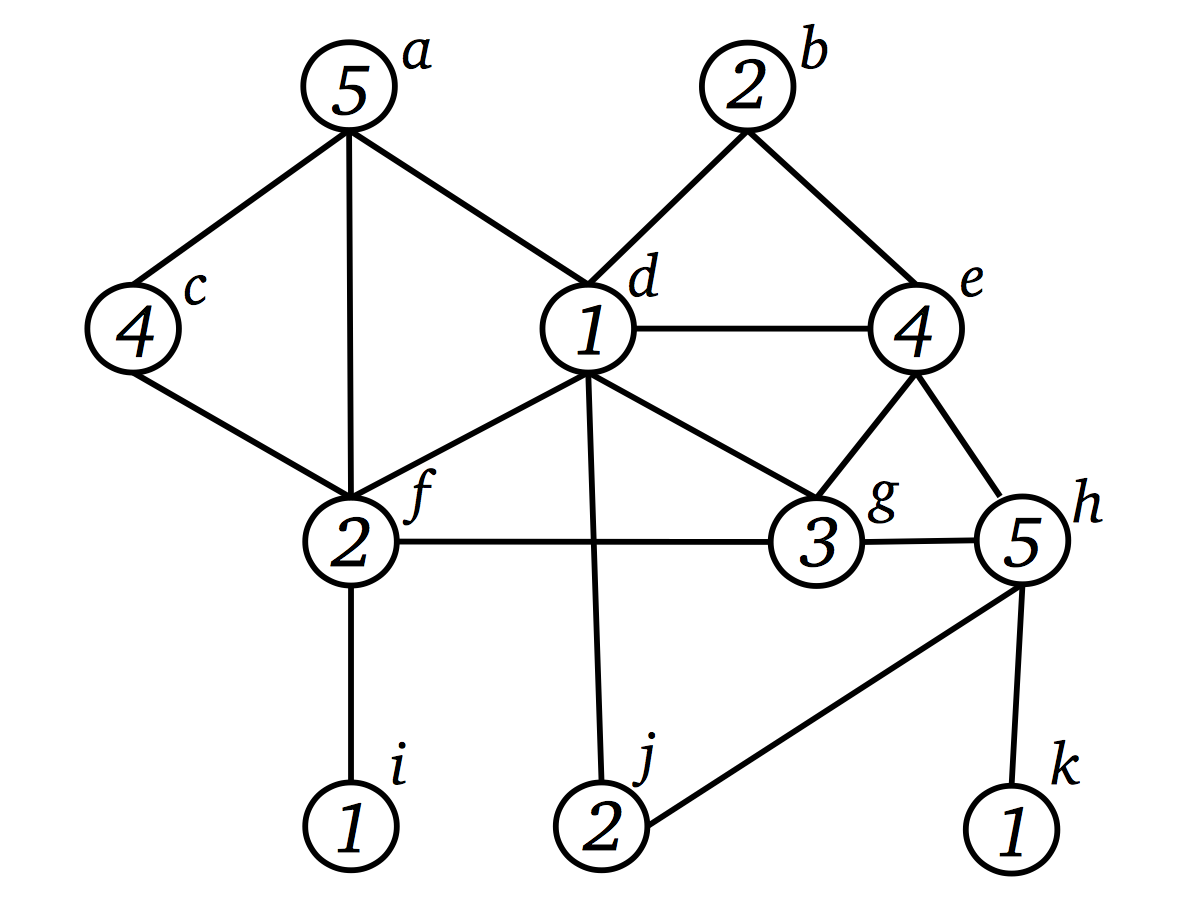}
   \label{fig:bcoloring5}
 }
\caption{Illustrations of $b$-colorings with four and five colors.}
\label{fig:bcoloring}
\end{figure}

The chromatic number of a graph $G$, $\rchi(G)$, is the minimum number of colors needed to properly color $G$. The $b$-chromatic number of a graph $G$, $\rchi_{b}(G)$, is the maximum number of colors for which $G$ admits a $b$-coloring. The coloring problem consists in encountering a proper coloring of a graph minimizing the number of colors. The $b$-coloring problem consists in encountering a proper $b$-coloring of a graph maximizing the number of colors. The problem of finding $\rchi_{b}(G)$ was shown to be NP-hard in \citeA{IrvMan99}, thus the $b$-coloring problem is NP-hard.

Although the $b$-coloring and the coloring problems appear to be closely related, they have several differences. First of all, they consider the objective functions in opposite directions and the difference between their optimal solution values can be arbitrarily large~\cite{KraTuzVoi02}. Furthermore, the $b$-coloring problem can be largely influenced by the girth (length of a shortest cycle) of the graph, what is not exactly the case for the coloring problem~\cite{girth}. Besides, a property that is commonly exploited by constructive and enumerative methods for the coloring problem is the fact that one can have solutions with the number of colors ranging from the chromatic number to the cardinality of the vertex set. However, it is not true that one can construct a $b$-coloring with $k$ colors for every integer $k$ ranging from the chromatic number to the $b$-chromatic number~\cite{Barth07}. Additionally, notice that a proper graph coloring which is not a $b$-coloring can be trivially improved by the removal of a color, namely, one that does not have a $b$-vertex. Therefore, when one is trying to minimize the number of colors, $b$-colorings appear naturally as otherwise the available coloring could be easily improved. On the other hand, when one is trying to maximize the number of colors, it is a challenging task to increase the number of colors while ensuring that $b$-vertices are generated for every new color. This suggests that the search for good quality solutions for the $b$-coloring problem should explore the structure of feasible solutions in a different manner. 

\citeA{CaLiMa15} presented a motivation for solving the $b$-coloring problem, namely, finding an upper bound for the $b$-algorithm which is a heuristic approach for the coloring problem. The $b$-algorithm works as follows, it begins with a greedy coloring and afterwards tries to reduce the number of used colors by changing the colors of certain vertices.
In this context, a $b$-vertex represents a vertex that cannot have its color changed and thus forbids further improvements by the $b$-algorithm. Hence, the $b$-chromatic number represents the worst case of the $b$-algorithm. 

Let $N(v) = \{u \in V \ | \ uv \in E \}$ be the \textit{open neighborhood} (or simply neighborhood) of $v$ in $G$, $N[v] = N(v) \cup \{v\}$ be the \textit{closed neighborhood} of $v$ in $G$, $\bar{N}(v) = V \setminus N[v]$ be the \textit{anti-neighborhood} of $v$ in $G$, and $\bar{N}[v] = \bar{N}(v) \cup \{v\}$ be the \textit{closed anti-neighborhood} of $v$ in $G$.
Define $N_{c}(v) = \{i \in K\ | \ c(u) = i \textrm{ for some } u\in N(v)  \} $ to be the set of colors adjacent to $v$, which we denote the \textit{color neighborhood} of $v$. Also, let $N_{c}[v] = N_{c}(v) \  \cup \  \{ c(v) \}$ be the \textit{color closed neighborhood}, and $\bar{N}_{c}(v) = K \setminus N_{c}[v]$ be the \textit{color anti-neighborhood} of $v$.
Denote the \textit{degree} of $v$ by $d(v)$, which is the size of its neighborhood $|N(v)|$. Considering $\Delta(G)$ to be the \textit{maximum degree} of a graph, we write $\Delta$ whenever $G$ is clear from the context. The neighborhood of a $b$-vertex can contain at most $\Delta$ colors.  Define the \textit{color degree} of $v$ by $d_c(v)$, which is the size of its color neighborhood $|N_{c}(v)|$. Consider a sorting of the vertices $V = \{v_1, v_2,\ldots,v_n\}$ such that $d(v_1) \geq d(v_2) \geq ... \geq d(v_n)$. The invariant $m(G) = max\{i\ | \ i - 1 \leq d(v_i)\}$ provides an upper bound for the $b$-chromatic number of $G$. 
Let $V_{m} \subseteq V$ be the subset of vertices with degree at least $m(G) - 1$, i.e. for each $v \in V_{m}$, $d(v) \geq m(G) - 1$. 
Denote $K_{m} \subseteq K$ as the set of colors that were attributed to some vertex in $V_m$ in a given coloring $c$, i.e. $k \in K_{m}$ if there is a vertex $v \in V_{m}$ such that $c(v) = k$.

\subsection{Literature review}

The concept of $b$-coloring appeared in different applications. \citeA{GacEglLebEmp08,GacEglLebEmp09} applied $b$-coloring to improve postal mail sorting systems, which are based on efficient optical recognition of the addresses on envelopes. The authors presented a new approach for address block localization, which is a very important step on the recognition of the addresses. Their approach uses $b$-coloring to train a classifier in the identification of the address block, and according to the authors a rate of 98\% good locations on a set of 750 envelope images was obtained.
\citeA{ElgDesHacDusKhe06} proposed a new clustering approach based on $b$-coloring of graphs. The presented cluster validation algorithm evaluates the quality of clusters based on the $b$-vertex property. The authors take on this clustering technique to detect a new typology of hospital stays in the French healthcare system.

 Several authors studied properties of $b$-coloring for special classes of graphs. \citeA{KraTuzVoi02} have shown that deciding the $b$-chromatic number is NP-Complete even for bipartite graphs. A graph $G$ is $m$-tight if it has exactly $m(G)$ vertices with degree exactly equal to $m(G) - 1$. In this regard, \citeA{HavSalSam12} proved that deciding if $\rchi_{b}(G) = m(G)$  is NP-Complete for tight chordal graphs, while showing that the $b$-chromatic number of a split graph can be obtained in polynomial time.

 Primal bound results were introduced by \citeA{IrvMan99}. We can assume that the chromatic number $\rchi(G)$ is a lower bound, as every $b$-coloring is also a proper coloring.  The upper bound is $\Delta + 1$, on account of the additional color being the color of a $b$-vertex itself. This upper bound can be narrowed, since for a $b$-coloring we need a sufficient amount of vertices of high degree. Naturally, for a $b$-coloring with $k$ colors, at least $k$ vertices with $k-1$ minimum degree are necessary. 
 As a consequence, $m(G)$ is a reduced upper bound for the problem.
 A variety of bounds on the $b$-chromatic number were also presented in~\citeA{AlkKoh11, BalRaj13, KouMah02}.

Regular graphs belong to a special class of graphs such that $m(G) = \Delta + 1$, one of the main reasons why they attract significant study. \citeA{KraTuzVoi02} have shown that for every $d$-regular graph with at least $d^4$ vertices $\rchi_{b}(G) = \Delta + 1$,  establishing that there is only a limited number of $d$-regular graphs for which $\rchi_{b}(G) < \Delta + 1$. Later, \citeA{CabJak11} proved that for every $d$-regular graph with at least $2d^3$ vertices $\rchi_{b}(G) = \Delta + 1$.  A detailed review of the literature related to the $b$-chromatic number can be found in \citeA{JakPet18}.

The $b$-coloring problem for more general graphs was considered in several works. \citeA{CorValVer05} introduced an approximation approach for the $b$-chromatic number. They have shown that the $b$-chromatic number cannot be approximated within a factor of $120/133 - \epsilon$ for any constant $\epsilon > 0$, unless P = NP. \citeA{GalKat13} settled negatively the question about the existence of a constant-factor approximation algorithm for the $b$-chromatic number, proving that for graphs with $n$ vertices, there is no $\epsilon > 0$, for which the problem can be approximated within a factor $n^{1/4-\epsilon}$, unless P = NP.
 
Despite the fact that the $b$-coloring problem has received a lot of attention from the graph theory community, just a few authors considered optimization approaches such as metaheuristics or integer programming. To the best of our knowledge, \citeA{FisPetMerCre15} were the first authors to propose a metaheuristic algorithm for the $b$-coloring problem. They proposed an hybrid evolutionary algorithm and tested its performance on a set of small instances composed of $d$-regular graphs. For the tested $d$-regular instances, the metaheuristic obtained the optimal solutions, which were attested using a brute force method. Encouraged by those results, the authors also considered larger benchmark instances from the second DIMACS implementation challenge~\cite{JoDaTr96}. As far as our knowledge goes, the only metaheuristic for the $b$-coloring problem is the one presented in~\citeA{FisPetMerCre15}, contrasting with the classical graph coloring problem, as the latter has a diversity of heuristic methods proposed in the literature~\cite{AvHeZu03, BlZu08, We90, LuHa10, MaHaMa09}.

\citeA{KocPet15} introduced an integer linear programming formulation for the $b$-chromatic index $\rchi_{b}'(G)$, the edge version of the problem. The authors also provide bounds and general results for a diversity of direct products of graphs regarding the $b$-chromatic index. \citeA{KocMar18} proposed an integer programming approach for the decision version of the $b$-coloring problem, which consists in determining whether a graph $G$ admits a $b$-coloring with a given number of colors. The authors also performed a polyhedral study of the proposed formulation, presented valid inequalities and implemented a branch-and-cut algorithm. Computational experiments were performed testing whether $\rchi_{b}(G) = m(G)$ for the input graphs.

\subsection{Main contributions and organization}

The main contributions of this paper are an integer programming formulation for the $b$-coloring problem, a very effective multi-start multi-greedy randomized metaheuristic which attempts to explore the problem structure in the search for good quality solutions, and a matheuristic approach obtained by combining the proposed multi-start metaheuristic with a fix-and-optimize local search based on the introduced integer programming formulation. To the best of our knowledge, this paper presents the first matheuristic for the $b$-coloring problem, and the first integer programming formulation which can be directly applied to its optimization version. Furthermore, we present a benchmark set consisting of newly created instances as well as available ones for coloring and maximum clique problems. 
The computational experiments show that the newly proposed approaches are very effective, reaching and proving optimality for several of the tested instances. Furthermore, the approaches are able to outperform a state-of-the-art metaheuristic~\cite{FisPetMerCre15} for the $b$-coloring problem when taking into consideration all nine large instances considered by those authors.

The remainder of the paper is organized as follows. Section~\ref{sec:formulation} introduces an integer programming formulation for the $b$-coloring problem. Section~\ref{sec:heuristic} describes the multi-greedy randomized heuristic. Section~\ref{sec:multistart} presents the multi-start multi-greedy randomized metaheuristic, the MIP (mixed-integer programming) based fix-and-optimize local search procedure using the proposed integer programming formulation, and the matheuristic approach which is obtained by combining the first two.
Section~\ref{sec:experiments} summarizes the computational experiments. Final considerations are discussed in Section~\ref{sec:finalremarks}.

\section{Integer programming formulation}
\label{sec:formulation}

We now describe a formulation by representatives \cite{CamCorFro04} for the $b$-coloring problem. Consider the binary variable $x_{uv}$ to be equal to one if vertex $u$ represents the color of vertex $v$ and to be zero otherwise, defined for every ordered pair $(u,v)$, with $u \in V$ and $v \in \bar{N}[u]$. In the proposed formulation, a vertex $u\in V$ can only represent the color of another vertex if $x_{uu}=1$, which means that $u$ is the representative and also a $b$-vertex of that color. Note that a color may have several $b$-vertices, but only one of them will be the representative.

Define the set of vertices in the anti-neighborhood of $u$ which are not adjacent to other vertices in this anti-neighborhood as $\bar{N}^{*}(u) = \bar{N}(u) - \{v \ | \ \exists w \in \bar{N}(u), vw \in E\}$. Additionally, consider the complement of $E$ as $\bar{E} = \{ uv \ | \ uv \notin E\}$. The $b$-coloring problem can be cast as the following linear integer program: 
\begin{flalign}
\ z = \  \max \ \ \ & \sum_{u \in V} x_{uu} & \label{objaux} \\
 \ \ & \sum_{v \in \bar{N}[u]} x_{vu} = 1, & \forall \ u \in V, \label{aux2} \\
& x_{uv} + x_{uw} \leq x_{uu}, & \forall \ u \in V,\ v,w \in \bar{N}(u) \textrm{ s.t. } vw \in E, \label{aux3} \\
& x_{uv} \leq x_{uu}, & \forall \ u \in V, \ v \in \bar{N}^{*}(u), \label{aux4} \\
& \sum_{w \in N(v) \cap \bar{N}(u)} x_{uw} \geq x_{uu} + x_{vv} - 1, & \forall \ (u,v) \textrm{ s.t. } uv \in \bar{E}, \label{aux5}\\
& x_{uv} \in \{0,1 \} , & \forall \  u \in V, \ v \in \bar{N}[u]. \label{aux6}
\end{flalign}

The objective function~(\ref{objaux}) maximizes the number of representative vertices, which are the $b$-vertices.
Constraints~(\ref{aux2}) ensure that every vertex must have a color. Constraints~(\ref{aux3}) force the coloring to be proper. Constraints~(\ref{aux4}) guarantee that a vertex can only give a color if it is a representative (notice that if this constraint is removed, a vertex that has a stable set as anti-neighborhood is allowed to represent all its neighborhood without being a representative). Constraints~(\ref{aux5}) are the $b$-coloring restrictions which imply that if both $u$ and $v$ are $b$-vertices, then there must be a neighbor of $v$ which is represented by $u$. 
This is achieved due to the fact that if both $u$ and $v$ are representatives, the right-hand side is equal to one, implying that the summation in the left-hand side, which is composed by the neighbors of $v$ that can be represented by $u$, should be at least one.
Constraints~(\ref{aux6}) ensure the integrality requirements on the variables.

\begin{obs}\label{obs:fixzero}
Let $\underline{z}$ be any valid lower bound for the optimal value $z^*$ of formulation (\ref{objaux})-(\ref{aux6}), i.e. $\underline{z} \leq z^*$. Let $V' \subset V$ be the set of vertices with degree strictly smaller than $\lceil \underline{z} \rceil - 1$, i.e., $d(u) < \lceil \underline{z} \rceil - 1$ for every  $u \in V'$.
Therefore, one can set to zero variables $x_{uu}$ corresponding to vertices $u \in V'$ without losing optimality, as the vertices in $V'$ can never be $b$-vertices in a $b$-coloring with at least $\lceil \underline{z} \rceil$ colors. Furthermore, variables $x_{uv}$ would also be set to zero for every pair $(u,v)$ such that $u\in V'$ and $v \in \bar{N}(u)$. 
\end{obs}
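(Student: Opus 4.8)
The plan is to prove Observation~\ref{obs:fixzero} by showing directly that any vertex in $V'$ fails the structural requirement to be a $b$-vertex in a $b$-coloring with at least $\lceil \underline{z}\rceil$ colors, and then translating this into the corresponding statements about the variables of formulation (\ref{objaux})-(\ref{aux6}). First I would recall the basic counting fact already noted in the literature review: in any $b$-coloring using $k$ colors, a $b$-vertex $v$ for a given color must have, among its neighbors, at least one vertex of each of the other $k-1$ colors, so $d(v) = |N(v)| \geq k-1$. Applying this with $k = \lceil \underline{z}\rceil$ (an integer, since $\underline{z}\leq z^*$ and $z^*$ is an integer, so $\lceil\underline{z}\rceil\leq z^*$ and a $b$-coloring with $\lceil\underline{z}\rceil$ colors exists / is the relevant target), any $b$-vertex in such a coloring has degree at least $\lceil\underline{z}\rceil - 1$. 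Hence a vertex $u$ with $d(u) < \lceil\underline{z}\rceil - 1$, i.e.\ $u \in V'$, can never be a $b$-vertex in any $b$-coloring with at least $\lceil\underline{z}\rceil$ colors.

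Next I would connect this to the formulation. Since $\underline{z}$ is a valid lower bound, there exists an optimal solution to (\ref{objaux})-(\ref{aux6}) whose value $z^*$ satisfies $z^* \geq \lceil\underline{z}\rceil$ (using that $z^*$ is integral), and in the representatives model the value equals the number of vertices $u$ with $x_{uu}=1$, each of which is the representative $b$-vertex of its color. By the argument above, no $u \in V'$ can serve as a $b$-vertex in such a solution, so in particular $x_{uu} = 0$ for all $u \in V'$ in some optimal solution; therefore adding the constraints $x_{uu}=0$ for $u\in V'$ does not remove all optimal solutions, i.e.\ it preserves the optimal value. Finally, I would observe that constraints~(\ref{aux4}) (and~(\ref{aux3})) force $x_{uv} \leq x_{uu}$ for every $v \in \bar N(u)$ — more precisely, (\ref{aux4}) gives it directly for $v\in\bar N^*(u)$, and for the remaining $v\in\bar N(u)$ it follows from (\ref{aux3}) by picking a neighbor $w$ of $v$ inside $\bar N(u)$ — so once $x_{uu}=0$ is imposed, every variable $x_{uv}$ with $u\in V'$ and $v\in\bar N(u)$ is automatically forced to zero as well, which is exactly the claimed reduction.

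The only mild subtlety — and the one point I would be careful about — is the rounding/integrality bookkeeping: the hypothesis gives $\underline{z}\leq z^*$ with $\underline{z}$ possibly fractional, and the statement is phrased in terms of $\lceil\underline{z}\rceil$ and of $b$-colorings "with at least $\lceil\underline{z}\rceil$ colors." Since $z^*$ is an integer and $z^* \geq \underline{z}$, we indeed have $z^*\geq\lceil\underline{z}\rceil$, so the target optimal solutions do use at least $\lceil\underline{z}\rceil$ colors and the degree bound $\lceil\underline{z}\rceil-1$ is the correct threshold; I would state this explicitly so the fixing is justified. There is no real obstacle here — the argument is a short combination of the elementary degree bound for $b$-vertices with the variable-coupling constraints of the formulation — so I expect the "hard part" to be merely presenting the case split in (\ref{aux3})/(\ref{aux4}) cleanly when deducing $x_{uv}\leq x_{uu}$ for all of $\bar N(u)$, not just $\bar N^*(u)$.
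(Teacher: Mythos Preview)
Your proposal is correct and follows exactly the reasoning the paper intends; note that the paper presents this as an \emph{Observation} with no separate proof, the justification being entirely the clause ``as the vertices in $V'$ can never be $b$-vertices in a $b$-coloring with at least $\lceil \underline{z} \rceil$ colors'' embedded in the statement itself. Your write-up simply makes explicit the degree count $d(v)\geq k-1$ for $b$-vertices, the integrality step $z^*\geq\lceil\underline{z}\rceil$, and the case split over constraints~(\ref{aux3})--(\ref{aux4}) needed to propagate $x_{uu}=0$ to all $x_{uv}$ with $v\in\bar N(u)$, all of which the paper leaves to the reader.
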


\begin{obs}\label{obs:fixzero2}
Let $\hat{x}$ be an integer feasible solution for (\ref{objaux})-(\ref{aux6}) with objective value $\hat{z}$. In any solution which strictly improves $\hat{x}$, every vertex $v \in V$ which is determined to be a representative must have degree $d(v)$ at least $\hat{z}$, i.e. $d(v) \geq \hat{z}$.
Let $V' \subset V$ be the set of vertices with degree strictly smaller than $\hat{z}$, i.e., $d(u) < \hat{z}$ for every  $u \in V'$.
Therefore, in order to obtain a solution which strictly improves $\hat{x}$, one can set to zero variables $x_{uu}$ corresponding to vertices $u \in V'$ without losing optimality in case such improving solution exists. Similarly to Observation~\ref{obs:fixzero}, variables $x_{uv}$ would also be set to zero for every pair $(u,v)$ such that $u\in V'$ and $v \in \bar{N}(u)$.   
\end{obs}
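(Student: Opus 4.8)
The plan is to follow the same reasoning used for Observation~\ref{obs:fixzero}, but replacing the generic bound $\underline{z}$ by the incumbent value $\hat{z}$ and exploiting that the objective~(\ref{objaux}) is integer‑valued. First I would note that if $x'$ is an integer feasible solution of~(\ref{objaux})--(\ref{aux6}) that strictly improves $\hat{x}$, then, since $\sum_{u\in V}x'_{uu}$ and $\hat{z}=\sum_{u\in V}\hat{x}_{uu}$ are both sums of binary variables, hence integers, we must have $k:=\sum_{u\in V}x'_{uu}\ge \hat{z}+1$. So it suffices to prove that every representative $v$ of $x'$ (i.e. every $v$ with $x'_{vv}=1$) satisfies $d(v)\ge k-1$, because this yields $d(v)\ge \hat{z}$ for all representatives of any improving solution.

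To prove the degree bound, fix $v$ with $x'_{vv}=1$ and let $u$ be any of the other $k-1$ representatives, $x'_{uu}=1$, $u\neq v$. I would exhibit a neighbor $w(u)\in N(v)$ with $x'_{u,w(u)}=1$, distinguishing two cases. If $uv\in E$, take $w(u)=u$: indeed $u\in N(v)$ and $x'_{uu}=1$. If $uv\in\bar{E}$, constraint~(\ref{aux5}) gives $\sum_{w\in N(v)\cap\bar{N}(u)}x'_{uw}\ge x'_{uu}+x'_{vv}-1=1$, so there is $w(u)\in N(v)\cap\bar{N}(u)$ with $x'_{u,w(u)}=1$. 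In both cases $w(u)\in N(v)$, so $w(u)\neq v$. Moreover, by constraints~(\ref{aux2}) every vertex is represented by exactly one vertex; since $w(u)$ is represented by $u$, the map $u\mapsto w(u)$ is injective on the set of the $k-1$ representatives distinct from $v$. Hence $v$ has at least $k-1$ pairwise distinct neighbors, i.e. $d(v)\ge k-1\ge \hat{z}$, as claimed.

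Taking the contrapositive finishes the argument: if $d(v)<\hat{z}$ then $v$ cannot be a representative in any solution strictly improving $\hat{x}$, so fixing $x_{uu}=0$ for every $u\in V'$ discards no such improving solution, hence loses no optimality whenever an improving solution exists. For the companion fixings $x_{uv}=0$ with $u\in V'$ and $v\in\bar{N}(u)$, I would observe that $x_{uv}\le x_{uu}$ always holds: if $v\in\bar{N}^{*}(u)$ this is exactly~(\ref{aux4}), and otherwise $v$ is adjacent to some $w\in\bar{N}(u)$, so~(\ref{aux3}) gives $x_{uv}\le x_{uv}+x_{uw}\le x_{uu}$; thus $x_{uu}=0$ forces $x_{uv}=0$. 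I expect the only slightly delicate point to be the injectivity bookkeeping — namely invoking~(\ref{aux2}) to guarantee the $k-1$ witnessing neighbors are distinct, together with the adjacent/non‑adjacent case split; the remainder is identical in spirit to Observation~\ref{obs:fixzero}.
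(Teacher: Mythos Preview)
Your argument is correct. The paper itself states Observation~\ref{obs:fixzero2} without proof, treating it as an immediate consequence of the combinatorial meaning of the formulation: a representative is a $b$-vertex of its color, and a $b$-vertex in a $b$-coloring with $k$ colors must have degree at least $k-1$; hence in any solution with $k\ge\hat{z}+1$ colors every representative has degree at least $\hat{z}$.

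Your route is a genuinely different (and more formal) one: rather than appealing to the semantic interpretation of representatives as $b$-vertices, you derive the degree bound purely from the constraints of the integer program. The case split on $uv\in E$ versus $uv\in\bar{E}$, the use of~(\ref{aux5}) to produce a witness neighbor in the non-adjacent case, and the injectivity argument via~(\ref{aux2}) together reconstruct, at the level of the LP constraints, exactly the statement ``$v$ sees a vertex of every other representative's color''. Likewise, your derivation of $x_{uv}\le x_{uu}$ from~(\ref{aux3})--(\ref{aux4}) makes explicit why the companion fixings are safe. What this buys you is that the observation is proved for \emph{any} feasible $x'$ of the IP, without ever invoking that $x'$ encodes a $b$-coloring; the paper's implicit argument is shorter but relies on that correspondence.
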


\section{Multi-greedy randomized heuristic}
\label{sec:heuristic}

In this section, we present a multi-greedy randomized constructive heuristic for the $b$-coloring problem.
The heuristic follows a two-phase framework similar to the one of \citeA{ElgDesHacDusKhe06}. In the first phase, an initial proper coloring, not necessarily a $b$-coloring, is generated. The second phase ensures a proper $b$-coloring is obtained starting from the coloring achieved in the first phase. In the remainder of this section, after presenting the pseudo-code of the two-phase framework, we describe the details of the first phase in Subsection~\ref{sec:firstphase} and of the second phase in Subsection~\ref{sec:secondphase}.

The multi-greedy randomized constructive heuristic runs in $O(|V|^2 + |V|\Delta^2 \log \Delta)$ (as it will be shown in Corollary~\ref{coro:runningtime}), and is described in Algorithm~\ref{alg:const}. {It takes as inputs the graph $G = (V,E)$ and two parameters regarding the sizes of restricted candidate lists (RCL) which will be defined later in this section, namely $\alpha$ and $\beta$. The algorithm returns a proper coloring $c$ and a set of colors $K_c$.} The heuristic uses the following structures:
 \begin{itemize}
    \item $c$: structure that represents the coloring which assigns a color to each vertex $v \in V$;
    \item $N_{c}$: structure that represents the color neighborhoods of vertices $v \in V$ in coloring $c$;
    \item $K_c$: set of colors used in coloring $c$;
    \item $K_{b}$: set of colors in $K_c$ that have $b$-vertices.
 \end{itemize}
 The first phase of the approach is invoked in procedure INITIAL-COLORING (line \ref{RC-5}), which will be detailed in Section~\ref{sec:firstphase}, to obtain an initial proper coloring employing $\Delta+1$ available colors. Observe that the upper bound $\Delta+1$ was used instead of $m(G)$ with the intention of not being too restrictive and give more flexibility for the heuristic to use colors that will be removed later in the second phase of the framework. {The structures $c$, $N_{c}$, $K_{c}$, and $K_{b}$ are determined by this call to INITIAL-COLORING.}

  As it was already mentioned, procedure INITIAL-COLORING does not ensure a $b$-coloring, as some colors in $K_{c}$ might not have a $b$-vertex.
  In order to obtain a feasible $b$-coloring, the second phase is invoked in procedure FIND-B-COLORING (line \ref{RC-7}), which will be detailed in Section~\ref{sec:secondphase}, in order to remove colors from $K_c$ until a $b$-coloring is achieved. {The updated structures $c$ and $K_{c}$ are returned at the end of the execution of FIND-B-COLORING.} RANDOMIZED-CONSTRUCTIVE thus returns the obtained $b$-coloring $c$ as well as the {set of used colors $K_c$} (line \ref{RC-8}). 
  
\begin{algorithm}[H]
\caption {{RANDOMIZED-CONSTRUCTIVE($G, \alpha, \beta$)}}
\label{alg:const}

    {$c, N_{c}, K_c, K_{b} \leftarrow$ INITIAL-COLORING($G, \alpha$)}\;\label{RC-5}
    {$c, K_c\leftarrow$} FIND-B-COLORING($G,\alpha, \beta, N_{c}, K_{b}, c, K_c$)\;\label{RC-7}

    \Return $b$-coloring $c$, $K_c$\;\label{RC-8}
\end{algorithm}

\subsection{First phase: obtaining an initial coloring}
\label{sec:firstphase}

An initial coloring is obtained using procedure INITIAL-COLORING, which is detailed in Algorithm~\ref{alg:initb}. {In addition to the graph $G$, the algorithm also takes as input a parameter $\alpha$ related to the size of restricted candidate lists. The structures $c$, $N_{c}$, $K_c$, and $K_{b}$ will be returned at the end of its execution.}
We remark that, for ease of explanation, the pseudo-code which will be presented assumes the graph is connected. An easy way to overcome this fact will be given once the algorithm is described.
The following structures are used by the algorithm:
\begin{itemize}
    \item $K'$: initial set of available colors;
    \item $Q$: stores the set of vertices which had already been colored;
    \item $\Upsilon_{v}$: keeps the vertices in $N(v)$ which have no attributed color;
    \item {$K_{m}$: set of colors in $K_c$ that were attributed to some vertex $v \in V$ with degree at least $m(G)-1$.}
\end{itemize}
INITIAL-COLORING uses the following auxiliary {method}:
\begin{itemize}
    \item {HEURISTIC-COLOR-VERTEX}: described in Algorithm \ref{alg:coloru}, the procedure takes as inputs the graph $G$, vertices $v$ and $u$, {as well as structures $N_{c}$, $K'$, $K_{m}$. The method returns a color to be attributed to vertex $u$. Firstly}, structure $LC$ is initialized as empty (line \ref{INIT-5a}), and will store the set of candidate colors for coloring $u$. The algorithm then checks if $u$ has degree greater than or equal to $m(G) - 1$ (line \ref{INIT-5b}) and tries initially to build $LC$ with the set of colors $k \in K'$ not belonging to the color neighborhood of neither $v$ nor $u$, and have not been assigned to a vertex with degree greater than or equal to $m(G) - 1$, i.e., $k \not\in N_{c}(v)$, $k \not\in N_{c}(u)$ and $k \not\in K_{m}$ (line \ref{INIT-6}). The purpose behind this coloring idea is to diversify the colors assigned to both neighborhoods of $u$ and $v$, while trying to give different colors to vertices with high enough degrees to become $b$-vertices, in an attempt to increase the probability of finding $b$-vertices that represent the greater amount of color classes. If $LC$ is still empty (line \ref{INIT-6a}), the algorithm tries to include in $LC$ colors $k \in K'$ not belonging to the color neighborhood of neither $v$ nor $u$, i.e.,  $k \not\in N_{c}(v)$ and $k \not\in N_{c}(u)$ (line \ref{INIT-6b}). If no such color exists, i.e., $LC$ remains with no elements in line \ref{INIT-7}, $LC$ is built in line \ref{INIT-8} with colors $k \in K'$ not belonging to the color neighborhood of $u$, i.e.,  $k \not\in N_{c}(u)$. This guarantees at least one color in $LC$ since the algorithm initially works with $\Delta+1$ available colors and $d(u) \leq \Delta$. 
    {The color in $LC$ with lowest index is returned in line \ref{INIT-8rr}.}
\end{itemize}

 \begin{algorithm}[H]
\caption {{INITIAL-COLORING($G, \alpha$)}}
\label{alg:initb}
    {$c(v) \leftarrow 0$ and $N_{c}(v) \leftarrow \varnothing$ for each $v \in V$ \;\label{RC-4a}
     $K_c, K_{m}, K_{b} \leftarrow  \varnothing$\;\label{RC-4}}
     
      $v_\Delta \leftarrow \argmax_{u \in V} \{ |N(u)|  \}$\;\label{INIT-0a}
     $K' \leftarrow \{1,2,3,...,\Delta+1\}$\;\label{RC-1}
     $c(v_{\Delta}) \leftarrow 1$\;\label{RC-2}
      {Update $N_{c}(v')$ for each $v'\in N(v_{\Delta})$, $K_c$ and $K_{m}$} \;\label{RC-2a}
     $Q \leftarrow \{v_{\Delta}\}$\; \label{INIT-0}
     \While {$Q \neq \varnothing$} { \label{INIT-1}
        Create {RCL$_{Q}(\alpha)$} with the best elements in $Q$\;\label{INIT-2}
        $v \leftarrow$ vertex randomly selected from {RCL$_{Q}(\alpha)$} \;\label{INIT-2a}
         $\Upsilon_{v} \leftarrow \{ w \in N(v) \ | \ c(w) = 0 \}$\;\label{INIT-3}
         
         \While {$\Upsilon_{v} \neq \varnothing$} { \label{INIT-4}
         Create {RCL$_{\Upsilon_{v}}(\alpha)$} with the best elements in $\Upsilon_{v}$\;\label{INIT-4a}
        $u \leftarrow$ vertex randomly selected from {RCL$_{\Upsilon_{v}}(\alpha)$} \;\label{INIT-4b}
            
                 {$c(u) \leftarrow$ {HEURISTIC-COLOR-VERTEX}($G,v,u,K',K_{m}$)}\;\label{INIT-8a}
                 {Update, if necessary, $N_{c}(v')$ for each $v'\in N(u)$, $K_c$ and $K_{m}$} \;\label{INIT-8b}
                 $Q \leftarrow Q \cup \{u\}$\;\label{INIT-10}
                 $\Upsilon_{v} \leftarrow \Upsilon_{v} \setminus \{u\}$\;\label{INIT-11}

         }
         $Q \leftarrow Q \setminus \{v\}$\;\label{INIT-12}
    }

         {$K_{b} \leftarrow \{c(v) \ | \ v\in V \textrm{ and }N_{c}[v]=K_{c} \}$}\;\label{INIT-13}
    \Return $c$, $N_{c}, K_c, K_{b}$\; \label{INIT-18}
\end{algorithm}

 {Algorithm~\ref{alg:initb} first initializes the used structures as follows. For each vertex $v \in V$, the color neighborhood of $v$ is initialized as empty and $c(v)$ is set to 0, which implies that no color is assigned to $v$ (line \ref{RC-4a}). The sets $K_{c}$, $K_{m}$ and $K_{b}$ are initialized as empty (line \ref{RC-4}). Next, the algorithm sets} $v_{\Delta}$ as the maximum degree vertex in $G$ in line \ref{INIT-0a}, where ties are broken arbitrarily. The set $K'$ is initialized with $\Delta+1$ colors in line \ref{RC-1}, followed by the coloring of $v_{\Delta}$  with color 1 in line \ref{RC-2}. 
 {The structures $N_c$, $K_c$ and $K_m$ are updated in line~\ref{RC-2a}.}
 The neighborhood of vertices in $Q$ are yet to be explored, and the set is initialized with $v_{\Delta}$ in line \ref{INIT-0}. The algorithm then performs a series of iterations to assign colors to the vertices in $G$ while the set $Q$ is not empty in lines \ref{INIT-1}-\ref{INIT-12}. Elements from a restricted candidate list (RCL) containing the best elements in $Q$ are randomly chosen along the construction of the solution. Given the vertices in $Q$ the greedy choice criterion for {RCL$_{Q}(\alpha)$} is:
\begin{itemize}
    \item maximization of the vertex degree: $p_1 = \max_{v \in Q} d(v)$.
\end{itemize}
{RCL$_{Q}(\alpha)$} is defined as a subset of $Q$ containing all candidates whose evaluation for the greedy criterion lies in an interval of values defined by a parameter $\alpha \in [0.0,1.0]$. Define $\underline{p_1} = \min_{v \in Q} d(v)$, thus this interval is given by $[p_1 - \alpha(p_1-\underline{p_1}), p_1]$. {RCL$_{Q}(\alpha)$} is created in line \ref{INIT-2}. A vertex $v$ is randomly selected from {RCL$_{Q}(\alpha)$} in line \ref{INIT-2a}. Set $\Upsilon_{v}$ is built in line \ref{INIT-3} with the vertices in $N(v)$ that have no assigned color. Similar to {RCL$_{Q}(\alpha)$}, elements from a restricted candidate list containing the best elements in $\Upsilon_{v}$ are randomly chosen along the construction of the solution. Given the vertices in $\Upsilon_{v}$, the greedy choice criterion for  {RCL$_{\Upsilon_{v}}(\alpha)$} is:
\begin{itemize}
    \item maximization of the vertex degree:  $p_2 = \max_{v \in \Upsilon_{v}} d(v)$.
\end{itemize}
{RCL$_{\Upsilon_{v}}(\alpha)$} is defined {for $\Upsilon_{v}$ as RCL$_{Q}(\alpha)$ was defined for $Q$}. {RCL$_{\Upsilon_{v}}(\alpha)$} is created in line \ref{INIT-4a}. A vertex $u$ is randomly selected from {RCL$_{\Upsilon_{v}}(\alpha)$} in line \ref{INIT-4b} and {receives a color determined by} procedure {HEURISTIC-COLOR-VERTEX} in line \ref{INIT-8a}. 
{The structures $N_c$, $K_c$ and $K_m$ are updated in line~\ref{INIT-8b}.}
The neighborhood of $u$ is yet to be explored, so the vertex is inserted into $Q$ in line \ref{INIT-10}. Vertex $u$ is then removed from $\Upsilon_{v}$ in line \ref{INIT-11} and a new iteration resumes, until set $\Upsilon_{v}$ becomes empty. 
 Vertex $v$ is then withdrawn from $Q$ in line \ref{INIT-12}. After all vertices have been colored, i.e., $Q$ is empty, the algorithm updates the list $K_{b}$ of colors having  $b$-vertices in {line \ref{INIT-13}}. The {structures $c$, $N_{c}$, $K_c$ and $K_{b}$} are then returned in line \ref{INIT-18}. Note that, for ease of explanation, the described pseudo-code assumes the graph is connected. However, in the case of a disconnected graph, this can be overcome by simply inserting into $Q$ the uncolored vertex of highest degree (if there is at least one uncolored vertex) as a last step in the loop of lines \ref{INIT-1}-\ref{INIT-12} whenever $Q$ becomes empty.

\begin{prop}\label{prop:firstphase}
    Algorithm~\ref{alg:initb} runs in $O(|V|^2)$.
\end{prop}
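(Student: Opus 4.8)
The plan is to bound separately the three parts of Algorithm~\ref{alg:initb}: the initialization (lines~\ref{RC-4a}--\ref{INIT-0}), the main double loop (lines~\ref{INIT-1}--\ref{INIT-12}), and the finalization (line~\ref{INIT-13}). I will work under the natural data-structure conventions: the vertex degrees are precomputed once (in $O(|V|+|E|)=O(|V|^2)$ time, which is also what line~\ref{INIT-0a} needs), and each color neighborhood $N_{c}(v)$, together with $K_{c}$, $K_{m}$, $K_{b}$, is stored as a Boolean indicator array indexed by $K'=\{1,\dots,\Delta+1\}$, so membership queries and single-element updates cost $O(1)$. Under these conventions the initialization costs $O(|V|)$ beyond the degree computation (coloring $v_{\Delta}$ touches only $O(d(v_{\Delta}))=O(|V|)$ entries of the $N_{c}$ arrays), and the finalization in line~\ref{INIT-13} costs $O(|V|^2)$, since for each $v\in V$ deciding whether $N_{c}[v]=K_{c}$ compares two arrays of length $O(|V|)$.

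The core of the argument consists of two counting observations about the main loop. First, every vertex is inserted into $Q$ at most once (line~\ref{INIT-0} for $v_{\Delta}$, line~\ref{INIT-10} for any vertex at the instant it receives its color, plus one insertion per connected component in the disconnected variant) and deleted from $Q$ at most once (line~\ref{INIT-12}); hence the outer \textbf{while} of lines~\ref{INIT-1}--\ref{INIT-12} executes $O(|V|)$ times. Second, each vertex receives its color exactly once — in line~\ref{RC-2} for $v_{\Delta}$, and otherwise in the unique iteration of the inner \textbf{while} of lines~\ref{INIT-4}--\ref{INIT-11} in which it is drawn from some $\Upsilon_{v}$; since a vertex is only ever removed from $\Upsilon_{v}$ (line~\ref{INIT-11}) and never re-added, the inner loop for a fixed dequeued $v$ performs exactly $|\Upsilon_{v}|$ iterations, and summing over all dequeued $v$ gives a total of $|V|-1$ inner-loop iterations over the whole execution.

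Next I would bound the cost of one pass through each loop body. One iteration of the outer body, excluding the inner loop, does: building RCL$_{Q}(\alpha)$ (one scan of $Q$ for the maximum/minimum degree plus a filtering pass, $O(|Q|)=O(|V|)$), a random choice from it, building $\Upsilon_{v}$ by scanning $N(v)$ ($O(d(v))=O(|V|)$), and deleting $v$ from $Q$; this is $O(|V|)$ in total. One iteration of the inner body does: building RCL$_{\Upsilon_{v}}(\alpha)$ ($O(|\Upsilon_{v}|)=O(\Delta)=O(|V|)$), a random choice, the call to HEURISTIC-COLOR-VERTEX, and the updates of $N_{c}(v')$ over $v'\in N(u)$ together with $K_{c}$ and $K_{m}$ ($O(d(u))=O(|V|)$). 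By the indicator-array convention, HEURISTIC-COLOR-VERTEX forms $LC$ by testing each of the $\Delta+1$ colors of $K'$ against $N_{c}(v)$, $N_{c}(u)$, $K_{m}$ in $O(1)$ each and then returns the minimum-index element of $LC$, so it runs in $O(\Delta)=O(|V|)$; hence the inner body is $O(|V|)$. Combining: the outer loop contributes $O(|V|)$ iterations $\times\,O(|V|)$ per body $=O(|V|^2)$; the inner loop contributes $O(|V|)$ iterations total $\times\,O(|V|)$ per body $=O(|V|^2)$; initialization and finalization add $O(|V|^2)$; so Algorithm~\ref{alg:initb} runs in $O(|V|^2)$.

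The step I expect to be the main obstacle to state cleanly is the second counting observation. The naive bound charges the inner loop $\sum_{v}|\Upsilon_{v}|=\Theta(|E|)$ iterations, each costing up to $\Theta(|V|)$ for HEURISTIC-COLOR-VERTEX and the neighborhood updates, which only yields $O(|V|^3)$; one must instead argue that each vertex is colored exactly once, so the inner loop runs only $|V|-1$ times in total over the whole algorithm, and then verify independently that the per-iteration cost really is $O(|V|)$ under the assumed representation of the color-neighborhood structures (this last point is what makes the $O(1)$ membership tests in HEURISTIC-COLOR-VERTEX essential rather than cosmetic).
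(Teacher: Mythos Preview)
Your proof is correct and follows essentially the same aggregate-analysis approach as the paper: both argue that each vertex enters and leaves $Q$ at most once and that the inner loop executes $|V|-1$ times in total (the paper phrases this as charging lines~\ref{INIT-4a}--\ref{INIT-11} to the moment a vertex enters $Q$), with $O(|V|)$ work per outer iteration and $O(\Delta)$ work per inner iteration. The only cosmetic difference is that the paper maintains $Q$ and $\Upsilon_v$ as degree-sorted lists (so insertions cost $O(|V|)$ and building the RCL is trivial), whereas you leave them unsorted and scan to build the RCL; either way the per-operation cost is $O(|V|)$ and the total is $O(|V|^2)$.
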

\begin{proof}
Consider $Q$ and $\Upsilon_v$ to be ordered lists containing vertices sorted in nonincreasing order of vertex degree, which means that every element entering these lists should be inserted into the correct ordered position. Additionally, assume $N_c(v)$ for each $v\in V$, $K_c$ and $K_m$ to be represented as $\Delta+1$-dimensional binary vectors, with each element $k$ representing whether color $k$ belongs to the corresponding set or not. 
{Firstly, consider the running time to perform a single update of the structures. Note that there are $O(\Delta)$ updates of structures $N_c(v')$ and each of them can be done in $O(1)$. The updates of $K_c$ and $K_m$ can all be done in $O(1)$. Thus, a single update of all the required structures can be done in $O(\Delta)$.}
{HEURISTIC-COLOR-VERTEX} runs in $O(\Delta)$, which is implied by the construction of $LC$ and the selection of its minimum value.
In Algorithm~\ref{alg:initb}, the instructions of lines \ref{INIT-0a}-\ref{INIT-0} run in $O(|V|)$.
{Line \ref{INIT-13} can be done in $O(|V|\Delta)$.}
In order to determine the complexity of the while loop in lines \ref{INIT-1}-\ref{INIT-12}, we perform an aggregated analysis.  Note that each vertex $v\in V$ is inserted into and removed from $Q$ at most once and each insertion into this ordered list can be performed in $O(|V|)$, implying $O(|V|^2)$ for all the insertions. As $Q$ is kept as an ordered list, whenever a vertex is to be removed from $Q$, line \ref{INIT-2} is carried out in $O(|V|)$. 
At the moment a vertex enters $Q$ lines \ref{INIT-4a}-\ref{INIT-11} are executed in $O(|\Delta|)$. We ommit the entrance of vertices in $\Upsilon_v$ from the analysis as they are directly related to their entrance in $Q$, i.e., whenever a vertex enters $\Upsilon_v$ in line~\ref{INIT-3} it will be removed from $\Upsilon_v$ in line~\ref{INIT-11} just after its entrance in $Q$.
Therefore, the overall running time of Algorithm~\ref{alg:initb} is $O(|V|^2 + |V| ( |V| + \Delta))$ which is $O(|V|^2)$.
\end{proof}

 
                 

\begin{algorithm}[H]
\caption {{{HEURISTIC-COLOR-VERTEX}($G,v,u,K',K_{m}$)}}
\label{alg:coloru}

            $LC \leftarrow \varnothing$\; \label{INIT-5a}
            \If{$d(u) \geq m(G)-1$}{\label{INIT-5b}
             $LC \leftarrow \{k \ | \ k \in K', k \not\in N_{c}(u) $, $k \not\in N_{c}(v)$ and $k \not\in K_{m}\}$\;\label{INIT-6}
             }
              \If{$LC = \varnothing$}{\label{INIT-6a}
              $LC \leftarrow \{k \ | \ k \in K', k \not\in N_{c}(u) $ and $k \not\in N_{c}(v)\}$\;\label{INIT-6b}
              }
             \If{$LC = \varnothing$}{\label{INIT-7}
              $LC \leftarrow$ $\{k \ | \ k \in K'$ and $k \not\in N_{c}(u)$\}\;\label{INIT-8} 
                 }

                   
                   \Return {$\min\{k \ | \ k \in LC\}$}\;\label{INIT-8rr}
          
\end{algorithm}


\subsection{Second phase: transforming the initial coloring into a $b$-coloring}
\label{sec:secondphase}
A feasible $b$-coloring is obtained using procedure FIND-B-COLORING, which is detailed in Algorithm \ref{alg:findb}. In addition to the graph $G$ and RCL size parameters $\alpha$ and $\beta$, the algorithm also takes as inputs the sets $N_{c}$, $K_{b}$, $K_c$, and the coloring $c$. Remark that the inputs $c$ and $K_c$ will be updated by the algorithm and will be returned at the end of its execution.
The following structure is used:
\begin{itemize}
    \item $\bar{K}_b$: set of colors that do not have $b$-vertices. 
\end{itemize}

\begin{algorithm}[H]
\caption {FIND-B-COLORING($G, \alpha, \beta, N_{c}, K_{b}, c, K_c$)}
\label{alg:findb}
     $\bar{K}_b \leftarrow K_c \setminus K_{b}$\;\label{RC-6}
    \While {$\bar{K}_b \neq \varnothing$} { \label{FC-0}
         Create {RCL$_{\bar{K}_b}(\beta)$} with the best elements in $\bar{K}_b$\; \label{FC-1}
         $r$ $\leftarrow$ color randomly selected from  {RCL$_{\bar{K}_b}(\beta)$}\; \label{FC-2}
         
         
         \ForEach{$v \in V$ such that  $c(v)=r$ }{ \label{FC-4}
             Create {RCL$_{\bar{N}_{c}(v)}(\alpha,\beta)$} with the best elements in $\bar{N}_{c}(v)$\; \label{FC-4a}
              {$c(v) \leftarrow$ color randomly selected from {RCL$_{\bar{N}_{c}(v)}(\alpha,\beta)$}}\;\label{FC-5}
             {Update, if necessary, $N_{c}(v')$ for each $v'\in N(v)$} \; \label{FC-6}
         }
         $K_c \leftarrow K_c \setminus \{r\}$\;  \label{FC-7a}
         
        \ForEach{$v \in V$ such that $c(v) \in \bar{K}_b$}{ \label{FC-7}
         {\If{$N_{c}(v) \cup \{c(v)\} =   K_c $}
         { \label{FC-9}
         $\bar{K}_b \leftarrow \bar{K}_b \setminus \{c(v)\} $\; \label{FC-11}
         }
        } }
        $\bar{K}_b \leftarrow \bar{K}_b \setminus \{r\}$\; \label{FC-12}
    }
    \Return $c, K_c$\; \label{FC-13}
\end{algorithm}

FIND-B-COLORING, which is a modification of the $b$-algorithm mentioned in the introduction, consists in iteratively eliminating colors from the graph by recoloring vertices colored with colors in $\bar{K}_b$. The set $\bar{K}_b$ is initialized with every color in {$K_c\setminus K_{b}$} in line \ref{RC-6}. The algorithm then performs a series of iterations while $\bar{K}_b$ is not empty (lines \ref{FC-0}-\ref{FC-12}). Elements from a restricted candidate list containing the best elements in $\bar{K}_b$ are randomly chosen along the construction of the solution. Given the colors in $\bar{K}_b$ the greedy choice criterion for  {RCL$_{\bar{K}_b}(\beta)$} is:
\begin{itemize}
     \item maximization of the color index: $p_3 = \max_{r \in \bar{K}_b} r$;
\end{itemize}
{Criterion $p_{3}$ aims to remove colors with higher index since after the execution of Algorithm~\ref{alg:initb}, colors with smaller index are presumably closer to have a $b$-vertex.}   {RCL$_{\bar{K}_b}(\beta)$} is defined as a subset of $\bar{K}_b$ containing its $\beta$ best candidates.  {RCL$_{\bar{K}_b}(\beta)$} is created in line \ref{FC-1}. A color $r$ is randomly selected from  {RCL$_{\bar{K}_b}(\beta)$} in line \ref{FC-2}. For each vertex $v \in V$ colored with $r$, i.e., $c(v) = r$, a new color is assigned to $v$ (lines \ref{FC-4}-\ref{FC-6}). Note that any color in $\bar{N}_{c}(v)$ is avaiable to color $v$. Elements from a restricted candidate list containing the best elements in $\bar{N}_{c}(v)$ are randomly chosen along the construction of the solution. Before explaining the greedy criterion, let $\zeta_{rv}$ be the number of vertices adjacent to $v$ such that color $r \in \bar{N}_{c}(v)$ is also not in their color neighborhood. Additionally, let $M_{uv} \subseteq \bar{N}_{c}(v)$ be the set of colors not adjacent to neither $u$ nor $v$. Define $M_{uv}^* = \argmin_{M_{uv}:u\in N(v)} |M_{uv}| $  as the minimum cardinality set among all $M_{uv}$ for $u \in N(v)$. Note that $M_{uv}^*$ is the set of colors not adjacent to the vertex with the minimum number of missing colors in its color neighborhood. Given the colors in $\bar{N}_{c}(v)$ the greedy choice criteria for {RCL$_{\bar{N}_{c}(v)}(\alpha,\beta)$} are:
\begin{itemize}
     \item {maximization of vertices with a new color added to their color neighborhood}: $p_4 = \max_{r \in \bar{N}_{c}(v)} \zeta_{rv}$;
     \item minimization of the color index considering the colors in $M_{uv}^*$: $p_5 = \min_{r \in M_{uv}^*} r$.
\end{itemize}

Criterion $p_{4}$ intends to increase the color neighborhood of as many vertices as possible, whereas $p_{5}$ aims to predict the vertex which is the closest to become a $b$-vertex. Given $p_{4}$, {RCL$_{\bar{N}_{c}(v)}(\alpha,\beta)$} is defined as a subset of $\bar{N}_{c}(v)$ containing all candidates whose evalutation of the greedy criterion lie in an interval of values defined by a parameter $\alpha \in [0.0,1.0]$. Define $\underline{p_4} = \min_{r \in \bar{N}_{c}(v)} \zeta_{rv}$, thus this interval is given by $[p_4 - \alpha(p_4-\underline{p_4}), p_4]$. As for $p_{5}$, RCL$_{\bar{N}_{c}(v)}$ is defined as a subset of ${\bar{N}_{c}(v)}$ containing its $\beta$ best candidates. 

{RCL$_{\bar{N}_{c}(v)}(\alpha,\beta)$} is created in line \ref{FC-4a}. Any of the greedy functions $p_{4}$ or $p_{5}$ can be chosen for the construction of {RCL$_{\bar{N}_{c}(v)}(\alpha,\beta)$} and they are selected at random with 50\% chance each. {Note that, as stated previously on the definition of $p_{4}$ and $p_{5}$, the selection of the one to be used will define if RCL$_{\bar{N}_{c}(v)}(\alpha,\beta)$ uses $\alpha$ or $\beta$.} 
{Vertex $v$ receives a color randomly selected from {RCL$_{\bar{N}_{c}(v)}(\alpha,\beta)$} in line~\ref{FC-5}.}
Color neighborhood of vertices in $N(v)$ are then updated in line \ref{FC-6}. After all vertices previously colored with $r$ have been assigned a new color, $r$ is removed from set $K_c$ in line \ref{FC-7a}. 

The algorithm then certifies if colors in $\bar{K}_b$ now have a $b$-vertex in lines \ref{FC-7}-\ref{FC-11}.  {Colors that now have a $b$-vertex are removed from $\bar{K}_b$ in line \ref{FC-11}}. Lastly, $r$ is removed from $\bar{K}_b$ in line \ref{FC-12}. The algorithm terminates when $\bar{K}_b = \varnothing$ which implies $K_{b} = K_c$, so the resulting $b$-coloring $c$ and the set of used colors $K_c$ are returned in line \ref{FC-13}.

\begin{prop}\label{prop:secondphase}
    Algorithm~\ref{alg:findb} runs in $O(|V|\Delta^2 \log \Delta)$.
\end{prop}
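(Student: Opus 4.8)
The plan is to bound the cost of the outer \texttt{while} loop in lines~\ref{FC-0}--\ref{FC-12} by an aggregated analysis, noting that each execution of the loop body removes at least one color (namely $r$) from $\bar{K}_b$ and from $K_c$, so there are $O(|K_c|) = O(\Delta)$ iterations of the outer loop. I would first fix data-structure conventions analogous to those in the proof of Proposition~\ref{prop:firstphase}: represent each $N_c(v)$, together with $K_c$, as $(\Delta+1)$-dimensional binary vectors (so membership tests and single-element updates are $O(1)$), and keep $\bar{K}_b$ as a structure from which the $\beta$ best elements (largest color index) can be extracted in $O(\Delta)$ time, e.g. by a linear scan. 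A single update of all $N_c(v')$ for $v' \in N(v)$ costs $O(\Delta)$, exactly as before.

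Next I would cost one iteration of the outer loop. Line~\ref{FC-1} builds $\mathrm{RCL}_{\bar{K}_b}(\beta)$ in $O(\Delta)$ and line~\ref{FC-2} is $O(1)$ (or $O(\beta) = O(\Delta)$). The \texttt{foreach} of lines~\ref{FC-4}--\ref{FC-6} ranges over the at most $|V|$ vertices currently colored $r$; for each such $v$ we must build $\mathrm{RCL}_{\bar{N}_c(v)}(\alpha,\beta)$ (line~\ref{FC-4a}) and then update the color neighborhoods of $N(v)$ (line~\ref{FC-6}). The update is $O(\Delta)$. The cost of line~\ref{FC-4a} is where the $\log\Delta$ factor enters: computing $p_4$ requires, for each candidate color $r' \in \bar{N}_c(v)$ (there are $O(\Delta)$ of them), the quantity $\zeta_{r'v}$, which counts neighbors $u$ of $v$ with $r' \notin N_c(u)$; doing this naively by scanning $N(v)$ for every candidate costs $O(\Delta^2)$, and then the interval-based RCL selection needs the min and an $O(\Delta)$ filter — I would argue this whole step is $O(\Delta^2)$, and likewise the $p_5$ alternative (computing each $M_{uv}$ and taking the minimum-cardinality one $M^*_{uv}$, then its $\beta$ smallest indices) is $O(\Delta^2)$, or $O(\Delta^2\log\Delta)$ if the $\beta$-best extraction is done by sorting. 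So each $v$ colored $r$ costs $O(\Delta^2\log\Delta)$, and since the $\mathrm{foreach}$ over $v$ with $c(v)=r$ has at most $|V|$ iterations in a single pass, but — and this is the key aggregation point — over the \emph{whole} algorithm each vertex is recolored at most $O(\Delta)$ times (once per color removal it can survive through), the total work in lines~\ref{FC-4}--\ref{FC-6} across all outer iterations is $O(|V|\,\Delta \cdot \Delta^2\log\Delta)$... which is too much, so the intended accounting must instead be: the $\mathrm{foreach}$ over all vertices colored $r$, summed over the $O(\Delta)$ outer iterations, touches each vertex $O(1)$ times per outer iteration but a vertex is only recolored when its current color is the chosen $r$, hence $O(|V|)$ recolorings total, giving $O(|V|\cdot\Delta^2\log\Delta)$ overall for lines~\ref{FC-4}--\ref{FC-6}. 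Finally lines~\ref{FC-7}--\ref{FC-12}: the $\mathrm{foreach}$ scans all $v$ with $c(v)\in\bar{K}_b$, testing $N_c(v)\cup\{c(v)\}=K_c$ in $O(\Delta)$ each, so $O(|V|\Delta)$ per outer iteration and $O(|V|\Delta^2)$ in total.

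I would then combine: the dominant term is the recoloring work, $O(|V|\,\Delta^2\log\Delta)$, which absorbs the $O(|V|\Delta^2)$ from the feasibility checks and the $O(\Delta^2)$ from the RCL construction on $\bar{K}_b$, yielding the claimed $O(|V|\Delta^2\log\Delta)$.

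The main obstacle — and the step I would be most careful about — is getting the amortized recoloring count right. The naive bound "at most $|V|$ vertices colored $r$ per outer iteration, times $O(\Delta)$ outer iterations" gives $O(|V|\Delta)$ recolorings and an extra $\Delta$ factor over the stated bound, so the proof must exploit that the \texttt{foreach} of line~\ref{FC-4} only recolors vertices whose \emph{current} color equals the single color $r$ chosen in that iteration, and that distinct outer iterations choose distinct $r$ while a given vertex's color, once changed, is a color already present (hence one not re-chosen as a fresh $r$ unless it later loses its $b$-vertex). Pinning down exactly why the total number of executions of the body of lines~\ref{FC-4a}--\ref{FC-6} is $O(|V|)$ rather than $O(|V|\Delta)$ — presumably because $K_c$ shrinks monotonically and a vertex is recolored only when its color is being permanently deleted from $K_c$, so each vertex is recolored at most once per deleted color that it ever carried, and a cleaner argument bounds the sum of $|\{v : c(v) = r\}|$ over all chosen $r$ by $|V|$ since the color-$r$ classes for the successively deleted $r$'s are disjoint at the time of deletion — is the crux, and I would state it as a short lemma before doing the arithmetic.
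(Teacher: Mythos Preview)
Your proposal contains a genuine gap at exactly the point you flag as ``the crux.'' The claim that the total number of recolorings over the whole run is $O(|V|)$ is false, and the justification you offer (``the color-$r$ classes for the successively deleted $r$'s are disjoint at the time of deletion'') does not establish it. Disjointness of color classes holds at any fixed moment because $c$ is a function, but the classes at \emph{different} deletion times can overlap: when a vertex $v$ with $c(v)=r$ is recolored to some $r'\in\bar N_c(v)$, nothing prevents $r'$ from itself lying in $\bar K_b$ and being selected for deletion in a later outer iteration, at which point $v$ is recolored again. A single vertex can thus be recolored once for each of the $O(\Delta)$ deleted colors, so the sum $\sum_r |\{v:c(v)=r\text{ at deletion of }r\}|$ is only bounded by $O(|V|\Delta)$, not $O(|V|)$. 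Combined with your per-recoloring cost of $O(\Delta^2\log\Delta)$, this yields $O(|V|\Delta^3\log\Delta)$, which overshoots.

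The paper's proof does \emph{not} attempt the sharper aggregation you are after; it uses precisely the ``naive'' bound you dismiss --- $O(|V|)$ executions of the inner body per outer iteration, times $O(\Delta)$ outer iterations --- and instead tightens the \emph{per-iteration} cost: it asserts that building $\mathrm{RCL}_{\bar N_c(v)}(\alpha,\beta)$ and doing the associated updates in lines~\ref{FC-4a}--\ref{FC-6} takes $O(\Delta\log\Delta)$, not the $O(\Delta^2\log\Delta)$ you derive from computing each $\zeta_{r'v}$ or each $|M_{uv}|$ by a double scan. It also argues that the test in line~\ref{FC-9} is $O(1)$ (by maintaining, alongside the indicator vectors, the number of nonzero entries for each $N_c(v)$ and for $K_c$, so the set equality reduces to a cardinality comparison), rather than your $O(\Delta)$. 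With those two cost estimates the arithmetic is $O(\Delta\cdot(\Delta\log\Delta + |V|\Delta\log\Delta + |V|)) = O(|V|\Delta^2\log\Delta)$. So to match the paper you should abandon the $O(|V|)$-total-recolorings lemma and instead justify the $O(\Delta\log\Delta)$ bound for a single execution of line~\ref{FC-4a}.
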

\begin{proof}
Observe that Algorithm~\ref{alg:findb} performs a series of color removals and updates. The while loop of lines \ref{FC-0}-\ref{FC-12} is executed $O(\Delta)$ times, as each color is removed at most once. On any occasion a color $r$ is to be removed from $\bar{K}_b$, line~\ref{FC-1} is carried out in $O(\Delta\log \Delta)$. The foreach loop of lines~\ref{FC-4}-\ref{FC-6} is executed $O(|V|)$ times and each iteration is performed in $O(\Delta\log \Delta)$, therefore the complete loop is executed in $O(|V|\Delta \log \Delta)$. The foreach loop of lines \ref{FC-7}-\ref{FC-11} is also executed $O(|V|)$ times and the verification and possible updates are all performed in $O(1)$ for each iteration, consequently the complete loop is executed in $O(|V|)$.
Note that in order to perform the verification of line~\ref{FC-9} in $O(1)$, one could keep for each $v\in V$ an indicator vector corresponding to $N_c(v)$ together with the number of nonzero entries in this vector, as well as an indicator vector corresponding to $K_c$ in conjunction with the number of nonzero entries in this vector. The verification could thus be performed by simply comparing the number of nonzero entries in these two indicator vectors.
Algorithm~\ref{alg:findb} thus runs in $O(\Delta ( \Delta\log \Delta + |V|\Delta \log \Delta + |V| ))$, which is $O(|V|\Delta^2 \log \Delta)$.
\end{proof}

\begin{coro}\label{coro:runningtime}
    Algorithm~\ref{alg:const} runs in $O(|V|^2 + |V|\Delta^2 \log \Delta)$.
\end{coro}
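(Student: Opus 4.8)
The statement to prove is Corollary~\ref{coro:runningtime}, which asserts that Algorithm~\ref{alg:const} (RANDOMIZED-CONSTRUCTIVE) runs in $O(|V|^2 + |V|\Delta^2 \log \Delta)$. The plan is to simply combine the two running-time bounds already established for the two phases of the heuristic. Algorithm~\ref{alg:const} consists of exactly two substantive steps: a call to INITIAL-COLORING (line~\ref{RC-5}) and a call to FIND-B-COLORING (line~\ref{RC-7}), followed by returning the result (line~\ref{RC-8}). Proposition~\ref{prop:firstphase} gives that INITIAL-COLORING runs in $O(|V|^2)$, and Proposition~\ref{prop:secondphase} gives that FIND-B-COLORING runs in $O(|V|\Delta^2 \log \Delta)$.

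The key steps, in order, are: (1) observe that the total running time of Algorithm~\ref{alg:const} is the sum of the running times of its two phase-procedures plus $O(1)$ for the bookkeeping of passing structures and returning; (2) substitute the bounds from Propositions~\ref{prop:firstphase} and~\ref{prop:secondphase}; (3) conclude that the total is $O(|V|^2) + O(|V|\Delta^2\log\Delta) + O(1) = O(|V|^2 + |V|\Delta^2\log\Delta)$. One should also note that the data structures assumed by the two propositions (ordered lists for $Q$ and $\Upsilon_v$, indicator vectors for $N_c(v)$, $K_c$, $K_m$) are mutually compatible and that the structures $c$, $N_c$, $K_c$, $K_b$ produced by INITIAL-COLORING are exactly those consumed by FIND-B-COLORING, so no conversion overhead is incurred between the two calls; this makes the additive composition legitimate.

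There is essentially no obstacle here: the corollary is an immediate consequence of the two preceding propositions, and the only thing to be careful about is that the two bounds are genuinely additive (sequential composition, not nested) and that the $O(1)$ interface cost between them does not dominate. One minor remark worth including is that neither term subsumes the other in general — since $\Delta$ can be as small as $O(1)$ (making $|V|^2$ dominate) or as large as $\Theta(|V|)$ (making $|V|\Delta^2\log\Delta = \Theta(|V|^3\log|V|)$ dominate) — so the bound is stated as the sum rather than a single term. This completes the argument.
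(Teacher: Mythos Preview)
Your proposal is correct and follows exactly the same approach as the paper: the corollary is derived immediately from Propositions~\ref{prop:firstphase} and~\ref{prop:secondphase} by adding the two running-time bounds for the sequential calls to INITIAL-COLORING and FIND-B-COLORING. Your additional remarks on data-structure compatibility and on why neither term dominates the other are not in the paper's proof but are reasonable elaborations.
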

\begin{proof}
The result follows from Propositions~\ref{prop:firstphase}~and~\ref{prop:secondphase}. Note that the running time of the algorithm is dominated by the calls to Algorithms~\ref{alg:initb}~and~\ref{alg:findb}, and therefore runs in $O(|V|^2 + |V|\Delta^2 \log \Delta)$.
\end{proof}

\section{The matheuristic approach}
\label{sec:multistart}

In this section, before describing the matheuristic approach, we present its two main components: (a) the multi-start multi-greedy randomized metaheuristic and (b) the MIP (mixed integer programming) based fix-and-optimize local search procedure. The multi-start metaheuristic consists in performing a predefined number of iterations of the multi-greedy randomized heuristic and is described in Subsection~\ref{sub:msa}. The MIP-based fix-and-optimize local search consists in solving a restricted MIP obtained by fixing certain decision variables and is described in Subsection~\ref{sub:miph}. Finally, Subsection~\ref{sub:matheur} presents the matheuristic approach which consists in the combination of the multi-start metaheuristic with the MIP-based fix-and-optimize local search procedure.

\subsection{Multi-start multi-greedy randomized metaheuristic}\label{sub:msa}
The pseudo-code of the multi-start multi-greedy randomized metaheuristic is described in Algorithm~\ref{alg:multis}. In addition to the graph $G$ and two parameters regarding the sizes of restricted candidate lists (RCL), namely $\alpha$ and $\beta$, the algorithm also takes as input $it_{max}$, which represents the maximum number of iterations that the multi-greedy randomized heuristic will be executed.

\begin{algorithm}[H]
\caption {MULTISTART-B-COL($G, \alpha, \beta, it_{max}$)}
\label{alg:multis}

     $K_c^* \leftarrow \varnothing$ \;\label{MS-0}
    \For{$i = 1,...,it_{max} $}{\label{MS-1}
        {$c_{i}, K_{c_i} \leftarrow$ RANDOMIZED-CONSTRUCTIVE($G, \alpha, \beta$)\;\label{MS-2}}
        \If{$|K_{c_i}| > |K_c^*|$}{\label{MS-3}
            $c^* \leftarrow c_{i}$\;\label{MS-3.1}
            $K_c^* \leftarrow K_{c_i}$\;\label{MS-3.2}
            
            \If{$|K_c^*| = m(G)$}{
                \Return $c^*$, $K_c^*$\;\label{MS-3.a}
            }

        }
    }\label{MS-1end}
    \Return $c^*$, $K_c^*$\;\label{MS-4}
\end{algorithm}

Procedure MULTISTART-B-COL will save in $c^*$ the best obtained coloring. The set of used colors in $c^*$, $K_c^*$, is initialized as empty (Algorithm~\ref{alg:multis}, line~\ref{MS-0}). The coloring generated at iteration $i \leq it_{max}$ is represented by $c_{i}$ and the corresponding set of used colors as $K_{c_i}$. $|K_{c_i}|$ represents the solution value, which is the number of used colors in coloring $c_{i}$. The loop in lines~\ref{MS-1}--\ref{MS-1end} performs iterations $i=1,\ldots,\mathit{it}_\mathit{max}$. The construction phase starts by invoking procedure RANDOMIZED-CONSTRUCTIVE to build the solution $c_{i}$ in line~\ref{MS-2}. In case an improving solution is obtained, the algorithm updates $c^*$  and $K_c^*$ in lines \ref{MS-3.1}-\ref{MS-3.2}. If the solution value of $c^*$ matches the upper bound $m(G)$ the execution of RANDOMIZED-CONSTRUCTIVE is terminated by returning $c^*$ in line \ref{MS-3.a}, as the solution is proven to be optimal. Otherwise, a new iteration begins until the maximum number of iterations $it_{max}$ is exceeded. The solution with the highest number of used colors, i.e., the best solution $c^*$ encountered by the multi-start phase, is returned in line~\ref{MS-4}.

\subsection{MIP-based fix-and-optimize local search}\label{sub:miph}

Given an available feasible solution, the MIP-based fix-and-optimize local search procedure consists in generating a subproblem obtained from the original $b$-coloring problem by fixing certain decision variables at the values they assume in the available feasible solution which is also offered as a warm start for the used MIP solver. With fewer variables remaining to be optimized, it is expected that the resulting subproblem is more tractable by a standard MIP solver than the original problem. In this work, the input feasible solution consists of the best solution generated by  MULTISTART-B-COL. The MIP-based fix-and-optimize local search is described in Algorithm~\ref{alg:mipheur}. 
In addition to the graph $G$ and an initial feasible solution, represented by $c$ and $K_c$, the algorithm also takes as input the maximum time allowed for solving the obtained MIP formulation given by MAXTIME. In our framework, the initial feasible solution offered to MIP-LS will be the currently best known solution returned by MULTISTART-B-COL.

\begin{algorithm}[H]
\caption {MIP-LS($G$, $c$, $K_c$, MAXTIME)}
\label{alg:mipheur}
$V^b, V^0 \leftarrow \varnothing$\;\label{mipheur-1}

\ForEach{$k \in K_c$}{\label{mipheur-3}
$u \leftarrow \argmax_{v \in V}\{d(v) \ | \ c(v) = k,\ N_c(v) = K_c\setminus \{k\} \}$\;\label{mipheur-4}
$V^b \leftarrow V^b \cup \{u\}$ \;\label{mipheur-5}
}
\ForEach{$u \in V \setminus V^b$}{\label{mipheur-6}
\If{$d(u) < |K_c|$}{\label{mipheur-7}
$V^0 \leftarrow V^0 \cup \{u\}$\;}\label{mipheur-8}
}
Solve the MIP (\ref{objaux})-(\ref{aux6}) with addition of constraints (\ref{fixing:21})-(\ref{fixing:20}) and $c$ given as warm start, restricted to time limit MAXTIME{, in order to obtain coloring $c^*$ using colors $K_c^*$} \; \label{mipheur-9}
\Return the best solution $c^*$, $K_c^*$ encountered by the MIP\; \label{mipheur-10}
\end{algorithm}

The set of representative $b$-vertices $V^b \subseteq V$ and the set of vertices $V^0 \subseteq V$ that cannot be representatives in an improving solution are initialized as empty in line~\ref{mipheur-1} of Algorithm~\ref{alg:mipheur}.
{Set $V^b$ is built according to the input solution in the foreach loop of lines~\ref{mipheur-3}-\ref{mipheur-5}.
 Following Observations~\ref{obs:fixzero}~and~\ref{obs:fixzero2},} set $V^0$ is built from the input solution in the foreach loop of lines~\ref{mipheur-6}-\ref{mipheur-8} {with all vertices which are not $b$-vertices in coloring $c$ and have degree strictly smaller than its number of colors $|K_c|$.}
 Line~\ref{mipheur-9} solves a mixed integer program defined by the formulation presented in Section~\ref{sec:formulation}, in which all variables in $V^b$ are fixed to one (i.e., all corresponding vertices are selected to be representatives in the solution) and all vertices in $V^0$ are fixed to zero. Additionally, coloring $c$ is provided as a warm start, i.e., as an initial feasible solution.
Fixing is achieved by adding the following additional constraints to the formulation
\begin{align}
    x_{uu} = 1, \ \ \ & \forall \ u \in V^b,
    \label{fixing:21}\\
    x_{uv} = 0, \ \ \ & \forall \ u \in V^0,\ v \in \bar{N}[u]. \label{fixing:20}
\end{align}
The best solution obtained by the resulting MIP restricted to a maximum time limit MAXTIME is returned in line~\ref{mipheur-10}. {Note that the input coloring $c$ is always feasible for this MIP subproblem.}

\subsection{Matheuristic approach} \label{sub:matheur}

Combinations of metaheuristics with exact algorithms from mathematical programming approaches such as mixed integer programming (MIP), called matheuristics, have received considerable attention over the last few years. It has been acknowledged by the optimization research community that combining effort from exact and metaheuristic approaches could achieve better solutions when compared with pure classic methods \cite{RaPu00, DuSt09}. Matheuristics frequently benefit from metaheuristics as the main method to compute good quality solutions, with the exact approach used to enhance these solutions by solving subproblems.
Motivated by recently successful results by matheuristics~\rafaelC{\cite{DoNiVo18, CuKrMe19, PeLaLuRi19, MelQueRib21}}, we combine the multi-start metaheuristic MULTISTART-B-COL
that appears in Algorithm~\ref{alg:multis}
with the MIP-based fix-and-optimize local search procedure presented in Algorithm~\ref{alg:mipheur}, which produces the matheuristic MSBCOL$^+$: \\

\noindent  \textbf{MSBCOL$^+$}\\ 
\textbf{Step 1:}  $c^*$, $K_c^*$ $\leftarrow$ MULTISTART-B-COL($G, \alpha, \beta, it_{max}$);\\
\textbf{Step 2:}  $c'^{*}$, $K_c'^{*}$ $\leftarrow$ MIP-LS($G$, $c^*$, $K_c^*$, MAXTIME);\\
\textbf{Step 3:} Return $c'^*$, $K_c'^*$.\\

\section{Computational experiments}
\label{sec:experiments}

All computational experiments were carried out on a machine running under Ubuntu x86-64 GNU/Linux, with an Intel Core i7-8700 Hexa-Core 3.20GHz processor and 16Gb of RAM. The {meta}heuristic was coded in C++ and the formulation solved using CPLEX 12.8 under standard configurations. 
\rafaelC{Each execution of the solver was limited to one hour (3,600s).}
Subsection~\ref{sec:instancias} describes the benchmark instances. Subsection~\ref{sec:testedapproaches} lists the tested approaches and reports the parameter settings.
Subsections~\ref{sec:small} and \ref{sec:large} summarize the computational results for small and large instances, correspondingly. 
Finally, Subsection~\ref{sec:compare} compares some of the obtained computational results with a state-of-the-art metaheuristic presented in {\citeA{FisPetMerCre15}} taking into consideration a subset of the large instances.

\subsection{Benchmark instances}\label{sec:instancias}

\noindent The tests were carried out on a set of benchmark instances divided into small ($\leq$ 10,000 edges) and large ($>$ 10,000 edges) graphs, and is composed of: 
\begin{itemize}
\item[(a)] new randomly generated instances; 
\item[(b)]  instances from the Second DIMACS Implementation Challenge.
\end{itemize}

 The new set of instances was constructed using the graph generator \textit{ggen}~\cite{ggen} and includes bipartite, geometric and random graphs. 
 \rafaelC{Small i}nstances were created with the following parameters: (a) $\{50, 60, 70, 80\}$ vertices; (b) edge probability for random and bipartite graphs and the euclidean distance for geometric graphs lie in $\{0.2, 0.4, 0.6, 0.8\}$.
 Five instances were generated for each combination of number of vertices and edge probability (or euclidean distance for the geometric graphs), therefore instances with those same characteristics, but different seeds, are organized into instance groups. Each instance group is identified by $C\_n\_p$, where $C$ represents the class of the graph: random ($R$), bipartite ($B$), and geometric ($G$); $n$ gives the number of vertices and $p$ denotes the edge probability for random and bipartite graphs, and the euclidean distance for geometric graphs. 
 \rafaelC{More challenging large bipartite and random instances were also created in a similar fashion but with the number of vertices in $\{500,600,700,800\}$.}
 We remark that all results reported for this set of instances represent average values over the corresponding instance group.

We also use the graphs presented in the benchmark instances from the Second DIMACS Implementation Challenge as they are largely used in the literature, especially for coloring and maximum clique problems \cite{AvHeZu03, LuHa10, MoGo18, NoPiSu18, SaCoFuLj19}. The instances are identified by their original filename and can be obtained in the DIMACS Implementation Challenges website~\cite{instances}. {We denote the instances for coloring problems as graph coloring instances and those for maximum clique problems as maximum clique instances.} {The complete benchmark instances along with detailed results for each instance 
are available in~\citeA{MelQueSan20} at Mendeley Data.}

\subsection{Tested approaches and parameters setting}\label{sec:testedapproaches}
In this subsection we present the tested approaches and the preliminary experiments carried out to determine the parameters of the proposed techniques. The following approaches were considered in the computational experiments:

\begin{itemize}
\item [(a)] MSBCOL: run exclusively MULTISTART-B-COL in parallel using all cores of the target machine;
\item [(b)] {MSBCOL$^+$: run the matheuristic}, using the best solution encountered by the metaheuristic MSBCOL as a warm start {for the MIP-based fix-and-optimize local search procedure};
 
\item [(c)] MSBCOL$^*$: run the complete integer programming formulation presented in Section~\ref{sec:formulation}, using the best solution encountered by the metaheuristic MSBCOL as a warm start. Following Observations~\ref{obs:fixzero}~and~\ref{obs:fixzero2}, variables corresponding to vertices with degree less than or equal to this best solution value are fixed to zero, as long as they are not $b$-vertices in the warm start solution;

\item [(d)] IP: Run the integer programming formulation presented in Section~\ref{sec:formulation} without any initial solution or fixings of variables.
\end{itemize}
The used test strategy was adopted to evaluate the behavior of the newly proposed methods according with the class and size of the benchmark instances. Furthermore, we wanted to verify the effectiveness of the {MIP-based fix-and-optimize local search} when compared with the complete formulation.

 Define $p(G)$ to be the density of $G$, calculated as $p(G) = \frac{2 \times |E|}{|V| \times (|V| - 1)}$, and let the maximum number of iterations for MULTISTART-B-COL, $it_{max}$, be computed as $it_{max} = 100 + \round{\frac{1000}{\sqrt{|V|} \times \sqrt{p(G)}}}$. {This formula for $it_{max}$ can be interpreted as follows.} {The minimum number of iterations that the algorithm executes is given by the first part of the formula, which is the constant 100. The variable number of iterations given by $\round{\frac{1000}{\sqrt{|V|} \times \sqrt{p(G)}}}$ is inversely proportional to the size and density of the graph, as iterations become more time consuming on larger and denser graphs. Such choice was made as an attempt to allow a reasonable number of iterations in order to avoid poor performance of the algorithm.}   
 The experiments to tune the parameter values are reported in the following. We randomly selected a small subset containing approximately 5.0\% of the instances with varying characteristics for parameter tuning. The following values were tested for each parameter:
\begin{itemize}
\item [(a)] $\alpha \in \{0.00, 0.05, 0.10, 0.15\}$; 
\item [(b)] $\beta \in \{0.00, 0.05, 0.10, 0.15\}$; 
\end{itemize}
The best obtained parameter values for MULTISTART-B-COL were $\alpha = 0.00$ and $\beta = 0.10$.

\subsection{Small instances}\label{sec:small}

Tables~\ref{table:bip1}-\ref{table:rand1} report the results for MSBCOL, MSBCOL$^+$, MSBCOL$^*$ and IP on the new set of generated \rafaelC{small} instances composed of bipartite, geometric and random graphs. The first column identifies the instance group. Columns 2 to 4 report the number of vertices ($|V|$), the average number of edges ($|E|$) along with the average solution upper bound for the instance group ($m(G)$).  Columns 5 to 8 give, for MSBCOL, the {best} encountered solution values ($z_M$), {the average solution values for the executed number of iterations ($z_{avg}$)}, the average running times in seconds ($time(s)$), and the percentual gap between the solution found by MSBCOL and the best obtained solution ($best$), calculated as $100 \times \frac{(best - z_M)}{best}$ ($\%_{best}$).  Columns 9 and 10 give, for MSBCOL$^+$, the encountered solution values ($z_{M^+}$) and the average running times in seconds ($time(s)$) for the MIP-based local search procedure. Columns 10 to 15 give, for the exact approaches MSBCOL$^*$ and IP, the encountered solution values ($z_{M^*}$ and $z_{IP}$, respectively), the average running times to solve the instances to optimality ($time(s)$), and the average open gaps (in \%) of the unsolved instances ($gap$), calculated as $100 \times \frac{(ub - lb)}{ub}$, where $lb$ represents the best known integer solution and $ub$ the best upper bound achieved at the end of the execution. The last two lines report the number of best known solutions found by each of the proposed approaches ($\# best$),  and, for MSBCOL$^*$ and IP, the amount of instances solved to optimality ($\# opt$). 


\rafaelC{The value '\textit{\color{blue}n/a}' in a cell indicates that, for at least one instance in the group, either the solver exceeded the time limit before obtaining a feasible solution or the execution was halted by the operating system due to memory limitations. 
The value '\textit{\color{black}t.l.}'  for column $time(s)$ means that none of the instances in the group were solved to optimality within the time limit  of 3,600 seconds using the corresponding integer program. 
The value '-'  for column $gap$ represents that all five instances in the group were solved to optimality.}
The best encountered solution values are shown in bold.

\begin{landscape}
\begin{table}[H]
\centering
\small
\caption{Results for MSBCOL conducted on small bipartite graphs.}
\begin{tabular}{lccc|cccc|cc|ccc|ccc}
\hline
\multicolumn{1}{l}{Instance group} & \multicolumn{3}{c|}{} & \multicolumn{4}{c|}{MSBCOL} & \multicolumn{2}{c|}{MSBCOL$^+$} & \multicolumn{3}{c|}{MSBCOL$^*$} & \multicolumn{3}{c}{IP}\\
 & $|V|$ & $|E|$ & $m(G)$ & $z_M$ & {$z_{avg}$} &  time(s) & $\%_{best}$ & $z_{M^+}$ & time(s) & $z_{M^*}$ & time(s) & gap(\%) & $z_{IP}$ & time(s) & gap(\%) \\
 \hline
bip\_50\_0.2	&	50	&	128.8	&	8.2	&	7.6	& {6.4} &	$<$0.1	&	7.3	&	7.8	&	$<$0.1	&	\textbf{8.2}	&	$<$0.1	&	-	&	\textbf{8.2}	&	17.6	&	-	\\			
bip\_50\_0.4	&	50	&	252.0	&	12.8	&	10.2 & {8.2} &	$<$0.1	&	15.0	&	11.8	&	0.6	&	\textbf{12.0}	&	5.2	&	-	&	\textbf{12.0}	&	50.6	&	-	\\			
bip\_50\_0.6	&	50	&	373.4	&	17.0	&	12.2	& {10.3} &	$<$0.1	&	17.6	&	14.0	&	1.0	&	\textbf{14.8}	&	840.6	&	-	&	\textbf{14.8}	&	583.8	&	6.7	\\			
bip\_50\_0.8	&	50	&	470.0	&	19.4	&	15.4	& {12.6} &	$<$0.1	&	9.4	&	16.6	&	$<$0.1	&	\textbf{17.0}	&	32.6	&	-	&	\textbf{17.0}	&	41.4	&	-	\\			
bip\_60\_0.2	&	60	&	181.2	&	9.6	&	8.8	& {7.5} &	$<$0.1	&	6.4	&	\textbf{9.4}	&	0.2	&	\textbf{9.4}	&	0.2	&	-	&	\textbf{9.4}	&	83.8	&	-	\\			
bip\_60\_0.4	&	60	&	351.6	&	14.8	&	11.2	& {9.4} &	$<$0.1	&	20.0	&	13.6	&	311.2	&	\textbf{14.0}	&	113.7	&	7.1	&	\textbf{14.0}	&	405.0	&	7.1	\\			
bip\_60\_0.6	&	60	&	526.2	&	19.8	&	14.4	& {11.9} &	$<$0.1	&	16.3	&	16.6	&	6.6	&	\textbf{17.2}	&	205.0	&	11.0	&	\textbf{17.2}	&	1,772.0	&	12.3	\\			
bip\_60\_0.8	&	60	&	709.0	&	24.8	&	16.8	& {11.5} &	$<$0.1	&	18.4	&	20.0	&	0.2	&	\textbf{20.6}	&	885.2	&	-	&	\textbf{20.6}	&	1,053.8	&	-	\\			
bip\_70\_0.2	&	70	&	246.2	&	10.4	&	9.4	& {8.0} &	$<$0.1	&	9.6	&	10.0	&	0.2	&	\textbf{10.4}	&	0.4	&	-	&	\textbf{10.4}	&	373.8	&	-	\\			
bip\_70\_0.4	&	70	&	468.6	&	17.4	&	12.6	& {10.7} &	0.1	&	19.2	&	\textit{\color{black}n/a} 	&	\textit{\color{black}n/a} 	&	\textit{\color{black}n/a} 	&	\textit{\color{black}n/a} 	&	\textit{\color{black}n/a} 	&	\textbf{15.2}	&	\textit{\color{black}t.l.}	&	13.2	\\
bip\_70\_0.6	&	70	&	700.4	&	23.0	&	15.8	& {13.1} &	0.1	&	17.7	&	\textbf{19.2}	&	2,337.2	&	18.6	&	\textit{\color{black}t.l.}	&	32.6	&	18.6	&	\textit{\color{black}t.l.}	&	26.4	\\			
bip\_70\_0.8	&	70	&	946.8	&	27.6	&	20.2	& {14.3} &	0.1	&	15.1	&	22.8	&	1.0	&	\textbf{23.8}&	60.5	&	14.5	&	\textbf{23.8}	&	853.0	&	14.1	\\				
bip\_80\_0.2	&	80	&	314.8	&	11.6	&	10.0	& {8.5} &	0.1	&	13.8	&	11.4	&	0.4	&	\textbf{11.6}	&	3.8	&	-	&	\textbf{11.6}	&	951.8	&	-	\\			
bip\_80\_0.4	&	80	&	644.8	&	19.2	&	13.4	& {11.2} &	0.1	&	19.3	&	\textbf{16.6}	&	901.8	&	16.4	&	\textit{\color{black}t.l.}	&	38.7	&	16.4	&	\textit{\color{black}t.l.}	&	107.8	\\			
bip\_80\_0.6	&	80	&	947.6	&	26.6	&	17.0	& {14.4} &	0.1	&	19.0	&	\textbf{21.0}	&	\textit{\color{black}t.l.}	&	20.0	&	\textit{\color{black}t.l.}	&	95.4	&	20.0	&	\textit{\color{black}t.l.}	&	97.0	\\			
bip\_80\_0.8	&	80	&	1,257.0	&	32.8	&	21.4	& {15.1} &	0.2	&	16.4	&	\textbf{25.6}	&	15.2	&	25.2	&	\textit{\color{black}t.l.}	&	30.5	&	24.8	&	\textit{\color{black}t.l.}	&	31.9\\				
\hline
\# best & & & & 0/16 & & & & 5/16 & & 11/16 & & & 12/16   &  \\ 
\# opt & & & & & & & & & & & & 47/80 & & & 45/80 \\ 
\hline
\end{tabular}
\label{table:bip1}
\end{table}

\begin{table}[H]
\centering
\small
\caption{Results for MSBCOL conducted on small geometric graphs.}
\begin{tabular}{lccc|cccc|cc|ccc|ccc}
\hline
\multicolumn{1}{l}{Instance group} & \multicolumn{3}{c|}{} & \multicolumn{4}{c|}{MSBCOL} & \multicolumn{2}{c|}{MSBCOL$^+$} & \multicolumn{3}{c|}{MSBCOL$^*$} & \multicolumn{3}{c}{IP}\\
 & $|V|$ & $|E|$ & $m(G)$ & $z_M$ & {$z_{avg}$} & time(s) & $\%_{best}$ & $z_{M^+}$ & time(s) & $z_{M^*}$ & time(s) & gap(\%) & $z_{IP}$ & time(s) & gap(\%) \\
 \hline
geo\_50\_0.2	&	50	&	125.8	&	8.8	&	8.4	& {7.9} &	$<$0.1	&	2.3	&	\textbf{8.6}	&	$<$0.1	&	\textbf{8.6}	&	$<$0.1	&	-	&	\textbf{8.6}	&	6.0	&	-	\\
geo\_50\_0.4	&	50	&	437.4	&	20.8	&	19.6	& {18.0} &	$<$0.1	&	4.9	&	20.4	&	$<$0.1	&	\textbf{20.6}	&	0.2	&	-	&	\textbf{20.6}	&	5.4	&	-	\\
geo\_50\_0.6	&	50	&	765.4	&	29.6	&	27.8	& {26.0} &	0.1	&	3.5	&	\textbf{28.8}	&	$<$0.1	&	\textbf{28.8}	&	0.2	&	-	&	\textbf{28.8}	&	1.2	&	-	\\
geo\_50\_0.8	&	50	&	1,002.4	&	36.4	&	34.2	& {33.2} &	$<$0.1	&	1.2	&	\textbf{34.6}	&	$<$0.1	&	\textbf{34.6}	&	$<$0.1	&	-	&	\textbf{34.6}	&	0.2	&	-	\\
geo\_60\_0.2	&	60	&	186.0	&	9.6	&	9.4	& {9.3} &	$<$0.1	&	2.1	&	9.4	&	$<$0.1	&	\textbf{9.6}	&	$<$0.1	&	-	&	\textbf{9.6}	&	19.4	&	-	\\
geo\_60\_0.4	&	60	&	637.8	&	24.2	&	21.6	& {19.8} &	0.1	&	10.7	&	24.0	&	$<$0.1	&	\textbf{24.2}	&	0.4	&	-	&	\textbf{24.2}	&	15.6	&	-	\\
geo\_60\_0.6	&	60	&	1,111.0	&	35.2	&	32.6	& {30.8} &	0.2	&	4.7	&	34.0	&	$<$0.1	&	\textbf{34.2}	&	0.2	&	-	&	\textbf{34.2}	&	2.4	&	-	\\
geo\_60\_0.8	&	60	&	1,494.6	&	45.8	&	42.6	& {41.2} &	0.1	&	1.4	&	\textbf{43.2}	&	$<$0.1	&	\textbf{43.2}	&	$<$0.1	&	-	&	\textbf{43.2}	&	$<$0.1	&	-	\\
geo\_70\_0.2	&	70	&	265.0	&	11.8	&	11.4	& {10.3} &	$<$0.1	&	3.4	&	11.6	&	$<$0.1	&	\textbf{11.8}	&	$<$0.1	&	-	&	\textbf{11.8}	&	66.4	&	-	\\
geo\_70\_0.4	&	70	&	812.0	&	26.4	&	23.4	& {21.3} &	0.1	&	11.4	&	26.0	&	$<$0.1	&	\textbf{26.4}	&	1.2	&	-	&	\textbf{26.4}	&	40.4	&	-	\\
geo\_70\_0.6	&	70	&	1,448.8	&	39.2	&	36.2	& {34.0} &	0.2	&	5.2	&	\textbf{38.2}	&	0.2	&	\textbf{38.2}	&	0.8	&	-	&	\textbf{38.2}	&	8.4	&	-	\\
geo\_70\_0.8	&	70	&	2,020.8	&	53.0	&	49.6	& {47.6} &	0.2	&	1.6	&	50.2	&	$<$0.1	&	\textbf{50.4}	&	0.2	&	-	&	\textbf{50.4}	&	0.4	&	-	\\
geo\_80\_0.2	&	80	&	331.2	&	12.4	&	11.8	& {10.8} &	$<$0.1	&	4.8	&	\textbf{12.4}	&	$<$0.1	&	\textbf{12.4}	&	$<$0.1	&	-	&	\textbf{12.4}	&	141.0	&	-	\\
geo\_80\_0.4	&	80	&	1,072.2	&	30.4	&	26.2	& {24.0} &	0.2	&	13.2	&	29.8	&	0.4	&	\textbf{30.2}	&	16.6	&	-	&	\textbf{30.2}	&	170.8	&	-	\\
geo\_80\_0.6	&	80	&	1,950.6	&	46.0	&	42.6	& {39.2} &	0.3	&	4.9	&	44.4	&	0.2	&	\textbf{44.8}	&	1.0	&	-	&	\textbf{44.8}	&	11.2	&	-	\\
geo\_80\_0.8	&	80	&	2,689.6	&	61.0	&	55.8	& {54.1} &	0.2	&	2.4	&	56.6	&	$<$0.1	&	\textbf{57.2}	&	0.2	&	-	&	\textbf{57.2}	&	0.8	&	-	\\
\hline
\# best & & & & 0/16 & & & & 6/16 & & 16/16 & & & 16/16  & &  \\
\# opt & & & & & & & & & & & & 80/80 & & & 80/80 \\
\hline
\end{tabular}
\label{table:geo1}
\end{table}

\begin{table}[H]
\centering
\small
\caption{Experiments conducted on small random graphs.}
\begin{tabular}{lccc|cccc|cc|ccc|ccc}
\hline
\multicolumn{1}{l}{Instance group} & \multicolumn{3}{c|}{} & \multicolumn{4}{c|}{MSBCOL} & \multicolumn{2}{c|}{MSBCOL$^+$} & \multicolumn{3}{c|}{MSBCOL$^*$} & \multicolumn{3}{c}{IP}\\
 & $|V|$ & $|E|$ & $m(G)$ & $z_M$ & {$z_{avg}$} &  time(s) & $\%_{best}$ & $z_{M^+}$ & time(s) & $z_{M^*}$ & time(s) & gap(\%) & $z_{IP}$ & time(s) & gap(\%) \\
 \hline
 
rand\_50\_0.2	&	50	&	248.8	&	12.8	&	9.6	& {8.1} &	$<$0.1	&	21.3	&	11.4	&	1.4	&	\textbf{12.2}	&	17.4	&	-	&	\textbf{12.2}	&	41.6	&	-	\\				
rand\_50\_0.4	&	50	&	485.0	&	21	&	13.0	& {11.3} &	$<$0.1	&	19.8	&	\textbf{15.8}	&	34.2	&	\textit{\color{black}n/a} 	&	\textit{\color{black}n/a} 	&	\textit{\color{black}n/a} 	&	\textit{\color{black}n/a} 	&	\textit{\color{black}n/a} 	&	\textit{\color{black}n/a} 	\\
rand\_50\_0.6	&	50	&	730.8	&	29.4	&	17.8	& {15.7} &	0.1	&	16.8	&	20.4	&	0.8	&	\textbf{21.4}	&	1299.5	&	8.5	&	21.2	&	837.0	&	8.8	\\				
rand\_50\_0.8	&	50	&	982.8	&	38	&	24.0	& {21.7} &	0.1	&	10.4	&	25.6	&	$<$0.1	&	\textbf{26.8}	&	8.2	&	-	&	\textbf{26.8}	&	6.0	&	-	\\				
rand\_60\_0.2	&	60	&	355.2	&	15	&	10.8	& {9.1} &	$<$0.1	&	21.7	&	13.4	&	12.2	&	\textbf{13.8}	&	799.25	&	7.1	&	\textbf{13.8}	&	667.5	&	7.3	\\				
rand\_60\_0.4	&	60	&	711.4	&	25.4	&	15.2	& {13.3} &	0.1	&	18.3	&	\textbf{18.6}	&	\textit{\color{black}t.l.}	&	18.2	&	\textit{\color{black}t.l.}	&	29.2	&	\textbf{18.6}	&	\textit{\color{black}t.l.}	&	26.9	\\				
rand\_60\_0.6	&	60	&	1065.0	&	35.8	&	20.6	& {18.1} &	0.1	&	15.6	&	\textbf{24.4}	&	12.0	&	24.2	&	\textit{\color{black}t.l.}	&	19.6	&	24.2	&	\textit{\color{black}t.l.}	&	19.9	\\				
rand\_60\_0.8	&	60	&	1426.4	&	46.2	&	27.8	& {25.3} &	0.1	&	12.6	&	30.6	&	$<$0.1	&	\textbf{31.8}	&	63.6	&	-	&	\textbf{31.8}	&	59.8	&	-	\\				
rand\_70\_0.2	&	70	&	477.6	&	16.8	&	11.8	& {9.9} &	0.1	&	21.3	&	\textbf{15.0}	&	2265.8	&	\textbf{15.0}	&	\textit{\color{black}t.l.}	&	8.7	&	14.8	&	\textit{\color{black}t.l.}	&	26.6	\\				
rand\_70\_0.4	&	70	&	984.4	&	30	&	17.0	& {15.0} &	0.2	&	19.8	&	\textbf{21.2}	&	\textit{\color{black}t.l.}	&	20.4	&	\textit{\color{black}t.l.}	&	42.0	&	20.4	&	\textit{\color{black}t.l.}	&	40.6	\\				
rand\_70\_0.6	&	70	&	1425.0	&	41.2	&	22.2	& {19.9} &	0.2	&	17.8	&	\textbf{27.0}	&	98.2	&	26.8	&	\textit{\color{black}t.l.}	&	30.8	&	\textbf{27.0}	&	\textit{\color{black}t.l.}	&	29.3	\\				
rand\_70\_0.8	&	70	&	1934.2	&	53.6	&	31.2	& {28.2} &	0.2	&	14.3	&	34.8	&	$<$0.1	&	\textbf{36.4}	&	659.4	&	-	&	\textbf{36.4}	&	846.8	&	-	\\				
rand\_80\_0.2	&	80	&	646.6	&	19.6	&	13.0	& {11.1} &	0.1	&	22.6	&	\textbf{16.8}	&	2916.2	&	16.2	&	\textit{\color{black}t.l.}	&	42.0	&	15.8	&	\textit{\color{black}t.l.}	&	81.0	\\				
rand\_80\_0.4	&	80	&	1266.6	&	33.8	&	18.2	& {16.4} &	0.2	&	18.0	&	\textbf{22.2}	&	\textit{\color{black}t.l.}	&	21.8	&	\textit{\color{black}t.l.}	&	68.9	&	21.4	&	\textit{\color{black}t.l.}	&	97.5	\\				
rand\_80\_0.6	&	80	&	1896.6	&	47.4	&	25.0	& {22.5} &	0.2	&	19.4	&	\textbf{31.0}	&	1127.0	&	29.6	&	\textit{\color{black}t.l.}	&	43.2	&	29.2	&	\textit{\color{black}t.l.}	&	43.3	\\				
rand\_80\_0.8	&	80	&	2521.8	&	61.8	&	35.0	& {31.2} &	0.2	&	14.2	&	39.4	&	$<$0.1	&	\textbf{40.8}	&	\textit{\color{black}t.l.}	&	6.3	&	\textbf{40.8}	&	\textit{\color{black}t.l.}	&	6.6	\\				
\hline
\# best & & & & 0/16 & & & & 9/16 & & 8/16 & & & 8/16  & &  \\
\# opt & & & & & & & & & & & & 26/80 & & & 23/80 \\
\hline
\end{tabular}
\label{table:rand1}
\end{table}

\end{landscape}

 The results show that MSBCOL performed very efficiently, as the reported running times for all instances in the three classes of graphs were below 0.4 seconds, which is practically negligible. Besides, the solutions encountered by MSBCOL are all within 23.0\% of the best encountered solution. Results are particularly noteworthy for geometric graphs, as 12 out of 16 (75.0\%) reported solutions were within 5.0\% of the best known. Regarding MSBCOL$^+$, the results show that the MIP-based fix-and-optimize local search has improved the initial solutions provided by MSBCOL for all 48 instance groups. MSBCOL$^+$ encountered the best known solutions for 6 out of 18 (33.3\%) instance groups for both geometric and bipartite graphs. Furthermore, MSBCOL$^+$ presented notable results for random graphs, as it reported 8 out of 16 (50.0\%) best known solutions.  MSBCOL$^+$ was also very effective, as 30 out of 48 instance groups (62.5\%) were solved in less than one second. The most impressive performance can be seen for geometric graphs, as the method was able to solve 13 out of 18 instance groups (72.2\%) in less than 0.1 seconds. 

Similarly to MSBCOL$^+$, MSBCOL$^*$ also improved the initial solutions provided by MSBCOL for all 48 instance groups. For bipartite graphs MSBCOL$^*$ found the best known solution for 12 out of 16 instance groups (75.0\%). Additionally for bipartite graphs, 47 out of 80 instances (58.8\%) were optimally solved by the approach.  MSBCOL$^*$ presented remarkable results for geometric graphs, reporting the best known and optimal solutions for all 16 instance groups. Regarding random instances, MSBCOL$^*$ performed well, encountering best known solution values for 8 out of 16 (50.0\%) instance groups. Furthermore, 26 out of 80 (31.5\%) small random instances were solved to optimality. Finally, within the given time limit, 28 out of 48 (58.3\%) instance groups were completely solved to optimally using MSBCOL$^*$. 

Lastly, IP presented similar results when compared with MSBCOL$^*$, as it reached optimality in 27 out of 48 (56.2\%) instance groups, however, MSBCOL$^*$ uses less computational times, which can be specially evidenced in groups $bip\_70\_0.2$, $bip\_80\_0.2$, $geo\_80\_0.2$,  and $geo\_80\_0.4$. The results show that for bipartite graphs IP obtained 11 out of 16 (68.7\%) best known solutions for bipartite graphs. As for geometric graphs, identical to MSBCOL$^*$, IP achieved the best known and optimal solutions for all 16 instance groups. IP also performed well on random instances, considering that it encountered best known solutions for 9 out of 16 (56.2\%) instance groups, and solved to optimality 23 out of 80 instances (28.8\%). 


Tables~\ref{table:dimacscol1}-\ref{table:dimacsclq1} report the results for MSBCOL, MSBCOL$^+$, MSBCOL$^*$ and IP on the set of small graphs from the Second DIMACS Implementation Challenge. The first column identify the DIMACS instance. Columns 2 to 4 report the number of vertices of each graph ($|V|$), the  number of edges ($|E|$) along with the solution upper bound ($m(G)$).  Columns 5 to 8 give, for MSBCOL and MSBCOL$^+$, the encountered solution values ($z_M$ and $z_{M^+}$, respectively) and the running time in seconds ($time(s)$). Columns 9 to 14 give, for MSBCOL$^*$ and IP, the encountered solution values ($z_{M^*}$ and $z_{IP}$, respectively), the running time to solve the instance ($time(s)$), and the open gap (in \%) in case of unsolved instances ($gap$), defined as before. The last two lines report the number of best known solutions found by each of the proposed approaches ($\# best$),  and, for MSBCOL$^*$ and IP, the amount of instances solved to optimality ($\# opt$).

\rafaelC{The value '\textit{\color{black}n/a}' in a cell expresses that either the solver exceeded the time limit before obtaining a feasible solution or the execution was halted by the operating system due to memory limitations. 
The value '\textit{\color{black}t.l.}'  for column $time(s)$ indicates that the instance was not solved to optimality within the time limit of 3,600 seconds. 
The value '-'  for column $gap$ means that the instance was solved to optimality.}
The best encountered solution values are shown in bold.

\begin{landscape}

\begin{table}
\centering
\small
\caption{Experiments conducted on small DIMACS graphs for coloring problems.}
\begin{tabular}{lccc|cccc|cc|ccc|ccc}
\hline
\multicolumn{1}{l}{Instance name} & \multicolumn{3}{c|}{} & \multicolumn{4}{c|}{MSBCOL} & \multicolumn{2}{c|}{MSBCOL$^+$} & \multicolumn{3}{c|}{MSBCOL$^*$} & \multicolumn{3}{c}{IP}\\
 & $|V|$ & $|E|$ & $m(G)$ & $z_M$ & {$z_{avg}$} & time(s) & $\%_{best}$ & $z_{M^+}$ & time(s) & $z_{M^*}$ & time(s) & gap(\%) & $z_{IP}$ & time(s) & gap(\%) \\
 \hline
dsjc125.1.col	&	125	&	736	&	17	&	13	& {11.0} &	0.2	&	23.5	&	15	&	11.0	&	\textbf{17}	&	794.0	&	-	&	8	&	\textit{\color{black}t.l.}	&	716.3	\\
dsjc125.5.col	&	125	&	3,891	&	63	&	30	& {27.4} &	0.6	&	14.3	&	\textbf{35}	&	\textit{\color{black}t.l.}	&	32	&	\textit{\color{black}t.l.}	&	126.8	&	34	&	\textit{\color{black}t.l.}	&	115.5	\\
dsjc125.9.col	&	125	&	6,961	&	109	&	62	& {57.7} &	0.6	&	8.8	&	65	&	$<$0.1	&	\textbf{68}	&	\textit{\color{black}t.l.}	&	7.0	&	\textbf{68}	&	\textit{\color{black}t.l.}	&	7.0	\\
dsjc250.1.col	&	250	&	3,218	&	33	&	19	& {17.0} & 	0.7	&	5.0	&	19	&	\textit{\color{black}t.l.}	&	19	&	\textit{\color{black}t.l.}	&	1105.3	&	\textbf{20}	&	\textit{\color{black}t.l.}	&	1150.0	\\
dsjr500.1.col	&	500	&	3,555	&	23	&	21	& {19.8} &	1.0	&	8.7	&	21	&	5.0	&	\textbf{23}	&	170.0	&	-	&	14	&	\textit{\color{black}t.l.}	&	3471.4	\\
fpsol2.i.2.col	&	451	&	8,691	&	53	&	52	& {51.6} &	9.9	&	1.9	&	52	&	3.0	&	\textbf{53}	&	5.0	&	-	&	30	&	\textit{\color{black}t.l.}	&	397.1	\\
fpsol2.i.3.col	&	425	&	8,688	&	53	&	52	& {51.6} &	9.4	&	1.9	&	52	&	2.0	&	\textbf{53}	&	4.0	&	-	&	\textit{\color{black}n/a}	&	\textit{\color{black}n/a}	&	\textit{\color{black}n/a}	\\
le450\_15a.col	&	450	&	8,168	&	57	&	\textbf{35}	& {29.8} &	3.7	&	0.0	&	\textbf{35}	&	\textit{\color{black}t.l.}	&	\textbf{35}	&	\textit{\color{black}t.l.}	&	270.6	&	21	&	\textit{\color{black}t.l.}	&	2042.9	\\
le450\_15b.col	&	450	&	8,169	&	56	&	\textbf{34}	& {29.7} &	3.5	&	0.0	&	\textbf{34}	&	\textit{\color{black}t.l.}	&	\textbf{34}	&	\textit{\color{black}t.l.}	&	309.4	&	20	&	\textit{\color{black}t.l.}	&	2150.0	\\
le450\_25a.col	&	450	&	8,260	&	63	&	\textbf{41}	& {36.5} &	4.8	&	0.0	&	\textbf{41}	&	\textit{\color{black}t.l.}	&	\textbf{41}	&	\textit{\color{black}t.l.}	&	166.7	&	\textit{\color{black}n/a}	&	\textit{\color{black}n/a}	&	\textit{\color{black}n/a}	\\
le450\_25b.col	&	450	&	8,263	&	60	&	\textbf{39}	& {35.2} &	4.1	&	0.0	&	\textbf{39}	&	\textit{\color{black}t.l.}	&	\textbf{39}	&	\textit{\color{black}t.l.}	&	200.6	&	\textit{\color{black}n/a}	&	\textit{\color{black}n/a}	&	\textit{\color{black}n/a}	\\
le450\_5a.col	&	450	&	5,714	&	34	&	\textbf{24}	& {20.1} &	1.6	&	0.0	&	\textbf{24}	&	\textit{\color{black}t.l.}	&	\textbf{24}	&	\textit{\color{black}t.l.}	&	1104.2	&	19	&	\textit{\color{black}t.l.}	&	2268.4	\\
le450\_5b.col	&	450	&	5,734	&	34	&	\textbf{22}	& {19.6} &	1.6	&	0.0	&	\textbf{22}	&	\textit{\color{black}t.l.}	&	\textbf{22}	&	\textit{\color{black}t.l.}	&	1472.7	&	14	&	\textit{\color{black}t.l.}	&	3114.3	\\
le450\_5c.col	&	450	&	9,803	&	52	&	27	& {24.0} &	2.4	&	15.6	&	27	&	\textit{\color{black}t.l.}	&	27	&	\textit{\color{black}t.l.}	&	1566.7	&	\textbf{32}	&	\textit{\color{black}t.l.}	&	1306.2	\\
le450\_5d.col	&	450	&	9,757	&	52	&	\textbf{26}	& {23.9} &	2.5	&	0.0	&	\textbf{26}	&	\textit{\color{black}t.l.}	&	\textbf{26}	&	\textit{\color{black}t.l.}	&	1630.8	&	21	&	\textit{\color{black}t.l.}	&	2042.9	\\
mulsol.i.1.col	&	197	&	3,925	&	65	&	60	& {58.3} &	0.8	&	6.3	&	60	&	1.0	&	\textbf{64}	&	11.0	&	-	&	\textbf{64}	&	1,546.0	&	-	\\
mulsol.i.2.col	&	188	&	3,885	&	53	&	39	& {39.0} &	1.1	&	23.5	&	50	&	1.0	&	\textbf{51}	&	1.0	&	-	&	\textbf{51}	&	2,737.0	&	-	\\
mulsol.i.3.col	&	184	&	3,916	&	54	&	51	& {44.8} &	1.1	&	1.9	&	51	&	$<$0.1	&	\textbf{52}	&	1.0	&	-	&	\textbf{52}	&	\textit{\color{black}t.l.}	&	10.3	\\
mulsol.i.4.col	&	185	&	3,946	&	54	&	\textbf{52}	& {51.5} &	1.1	&	0.0	&	\textbf{52}	&	$<$0.1	&	\textbf{52}	&	$<$0.1	&	-	&	\textbf{52}	&	\textit{\color{black}t.l.}	&	1.9	\\
mulsol.i.5.col	&	186	&	3,973	&	55	&	52	& {51.8} &	1.2	&	1.9	&	52	&	$<$0.1	&	\textbf{53}	&	1.0	&	-	&	\textbf{53}	&	1,702.0	&	-	\\
r125.1c.col	&	125	&	7,501	&	116	&	50	& {48.1} &	0.5	&	5.7	&	\textbf{53}	&	$<$0.1	&	\textbf{53}	&	$<$0.1	&	-	&	\textbf{53}	&	$<$0.1	&	-	\\
r125.1.col	&	125	&	209	&	7	&	\textbf{7}	& {6.1} &	$<$0.1	&	0.0	&	\textbf{7}	&	$<$0.1	&	\textbf{7}	&	$<$0.1	&	-	&	\textbf{7}	&	468.0	&	-	\\
r125.5.col	&	125	&	3,838	&	61	&	52	& {48.8} &	0.7	&	13.3	&	\textbf{60}	&	3.0	&	\textbf{60}	&	34.0	&	-	&	\textbf{60}	&	626.0	&	-	\\
r250.1.col	&	250	&	867	&	13	&	\textbf{12}	& {11.3} &	0.3	&	0.0	&	\textbf{12}	&	1.0	&	\textbf{12}	&	1.0	&	-	&	8	&	\textit{\color{black}t.l.}	&	1483.2	\\
zeroin.i.1.col	&	211	&	4,100	&	54	&	53	& {51.0} &	0.7	&	1.9	&	53	&	1.0	&	\textbf{54}	&	1.0	&	-	&	\textbf{54}	&	67.0	&	-	\\
zeroin.i.2.col	&	211	&	3,541	&	41	&	35	& {33.4} &	1.1	&	14.6	&	36	&	1.0	&	\textbf{41}	&	1.0	&	-	&	\textbf{41}	&	2,405.0	&	-	\\
zeroin.i.3.col	&	206	&	3,540	&	41	&	36	& {33.4} &	1.1	&	12.2	&	37	&	1.0	&	\textbf{41}	&	2.0	&	-	&	\textbf{41}	&	1,719.0	&	-	\\	\hline	
\# best & & & & 10/27 & & & & 13/27 & & 24/27 & & & 14/27  & &  \\
\# opt & & & & & & & & & & & & 16/27 & & & 9/27 \\
\hline
\end{tabular}
\label{table:dimacscol1}
\end{table}


\begin{table}
\centering
\small
\caption{Experiments conducted on small DIMACS graphs for the maximum clique problem.}
\begin{tabular}{lccc|cccc|cc|ccc|ccc}
\hline
\multicolumn{1}{l}{Instance name} & \multicolumn{3}{c|}{} & \multicolumn{4}{c|}{MSBCOL} & \multicolumn{2}{c|}{MSBCOL$^+$} & \multicolumn{3}{c|}{MSBCOL$^*$} & \multicolumn{3}{c}{IP} \\
 & $|V|$ & $|E|$ & $m(G)$ & $z_M$ & {$z_{avg}$} & time(s) & $\%_{best}$ & $z_{M^+}$ & time(s) & $z_{M^*}$ & time(s) & gap(\%) & $z_{IP}$ & time(s) & gap(\%) \\
 \hline
brock200\_2.clq	&	200	&	9,876	&	100	&	43	& {38.9} &	1.6	&	10.4	&	47	&	\textit{\color{black}t.l.}	&	43	&	\textit{\color{black}t.l.}	&	178.7	&	\textbf{48}	&	\textit{\color{black}t.l.}	&	149.7	\\
c125.9.clq	&	125	&	6,963	&	108	&	63	& {57.5} &	0.6	&	7.4	&	64	&	$<$0.1	&	\textbf{68}	&	\textit{\color{black}t.l.}	&	6.5	&	\textbf{68}	&	\textit{\color{black}t.l.}	&	6.8	\\
c-fat200-1.clq	&	200	&	1,534	&	18	&	\textbf{18}	& {18.0} &	$<$0.1	&	0.0	&	\textbf{18}	&	1.0	&	\textbf{18}	&	$<$0.1	&	-	&	\textbf{18}	&	\textit{\color{black}t.l.}	&	472.0	\\
c-fat200-2.clq	&	200	&	3,235	&	34	&	\textbf{34}	& {34.0} &	$<$0.1	&	0.0	&	\textbf{34}	&	1.0	&	\textbf{34}	&	$<$0.1	&	-	&	33	&	\textit{\color{black}t.l.}	&	225.2	\\
c-fat200-5.clq	&	200	&	8,473	&	86	&	\textbf{86}	& {86.0} &	$<$0.1	&	0.0	&	\textbf{86}	&	1.0	&	\textbf{86}	&	2.0	&	-	&	\textbf{86}	&	1,561.0	&	-	\\
c-fat500-1.clq	&	500	&	4,459	&	21	&	\textbf{21}	& {21.0} &	$<$0.1	&	0.0	&	\textbf{21}	&	4.0	&	\textbf{21}	&	4.0	&	-	&	18	&	\textit{\color{black}t.l.}	&	2677.8	\\
c-fat500-2.clq	&	500	&	9,139	&	39	&	\textbf{39}	& {39.0} &	$<$0.1	&	0.0	&	\textbf{39}	&	8.0	&	\textbf{39}	&	8.0	&	-	&	26	&	\textit{\color{black}t.l.}	&	1823.1	\\
hamming6-2.clq	&	64	&	1,824	&	58	&	34	& {32.4} &	0.1	&	2.9	&	34	&	$<$0.1	&	\textbf{35}	&	3.0	&	-	&	\textbf{35}	&	3.0	&	-	\\
hamming6-4.clq	&	64	&	704	&	23	&	13	& {10.4} &	0.1	&	13.3	&	\textbf{15}	&	\textit{\color{black}t.l.}	&	\textbf{15}	&	\textit{\color{black}t.l.}	&	53.3	&	\textbf{15}	&	\textit{\color{black}t.l.}	&	53.3	\\
johnson8-2-4.clq	&	28	&	210	&	16	&	\textbf{9}	& {6.4} &	$<$0.1	&	0.0	&	\textbf{9}	&	$<$0.1	&	\textbf{9}	&	2.0	&	-	&	\textbf{9}	&	4.0	&	-	\\
johnson8-4-4.clq	&	70	&	1,855	&	54	&	23	& {19.9} &	0.2	&	17.9	&	26	&	1.0	&	\textbf{28}	&	\textit{\color{black}t.l.}	&	31.0	&	26	&	\textit{\color{black}t.l.}	&	41.1	\\
johnson16-2-4.clq	&	120	&	5,460	&	92	&	17	& {14.6} &	0.6	&	54.1	&	17	&	$<$0.1	&	\textbf{37}	&	\textit{\color{black}t.l.}	&	32.8	&	36	&	\textit{\color{black}t.l.}	&	32.7	\\
keller4.clq	&	171	&	9,435	&	106	&	40	& {33.7} &	1.4	&	16.7	&	\textbf{48}	&	\textit{\color{black}t.l.}	&	43	&	\textit{\color{black}t.l.}	&	127.8	&	45	&	\textit{\color{black}t.l.}	&	117.0	\\
mann\_a9.clq	&	45	&	918	&	41	&	\textbf{21}	& {19.7} &	0.1	&	0.0	&	\textbf{21}	&	$<$0.1	&	\textbf{21}	&	$<$0.1	&	-	&	\textbf{21}	&	$<$0.1	&	-	\\
\hline
\# best & & & & 7/14 & & & & 9/14 & & 12/14 & & & 8/14  & &  \\
\# opt & & & & & & & & & & & & 8/14 & & & 4/14 \\
\hline
\end{tabular}
\label{table:dimacsclq1}
\end{table}

\end{landscape}

The multi-start approach, MSBCOL, achieved notable results, considering that for graph coloring instances (Table~\ref{table:dimacscol1}) the algorithm reported 10 out of 27 (37.0\%) best known results, and for maximum clique instances (Table~\ref{table:dimacsclq1}) returned 7 out of 14 (50.0\%) best known solution values. MSBCOL achieved the majority of solutions within 20.0\% of the best known, with a few exceptions being instances dsjc125.1.col, mulsol.i.2.col and johnson16-2-4.clq. Moreover, several of the reported solution values are within 5.0\% of the best known, as can be seen in dsjc250.1.col, fpsol2.i.2.col, fpsol2.i.3.col, mulsol.i.3.col, mulsol.i.5.col, zeroin.i.1.col, and hamming6-2.clq. Lastly, we also mention that these results were generated very efficiently, as the reported times were all under 10.0 seconds for graph coloring instances and 2.0 seconds for maximum clique instances. 

The results show that MSBCOL$^+$ improved the initial solutions provided by MSBCOL for 7 out of 27 (25.9\%) graph coloring instances and for 5 out of 14 (35.71\%) maximum clique instances. The most outstanding improvements can be seen in dsjc125.5.col, mulsol.i.2.col, brock2002.clq and keller4.clq. MSBCOL$^+$ encountered best known solutions for 13 out of 27 (48.2\%) graph coloring instances and 11 out of 14 (78.6\%) for maximum clique instances. 

One can see from the tables that MSBCOL$^*$ outperformed MSBCOL$^+$, especially for graph coloring instances, whereas the approach enhanced the initial solutions provided by MSBCOL in 15 out of 27 (55.6\%) cases. As for maximum clique instances, 6 out of 14 (42.9\%) initial solutions were improved. Note that MSBCOL$^*$ found the best known solution values for a majority of instances, which strongly supports its effectiveness. For graph coloring instances, 24 out of 27 (88.9\%) best known solutions were reported, as for maximum clique instances it returned 12 out of 14 (85.7\%) best known results. Additionally, MSBCOL$^*$ optimally solved 16 out of 27 (59.3\%) graph coloring instances, and 8 out of 14 (57.1\%) maximum clique instances.  

 The results also show that IP reported noticeable inferior results when compared to MSBCOL$^*$, as the approach returned best known solutions for 14 out of 27 (51.9\%) graph coloring instances and  8 out of 14 (57.1\%) maximum clique instances. Besides, IP did not obtain integer feasible solutions for three graph coloring instances (fpsol2.i.3.col, le450\_25a.col, le450\_25b.col), which reinforce the importance of the initial solutions provided by MSBCOL. Lastly, IP optimally solved 12 out of 27 (44.4\%) graph coloring instances, and 4 out of 14 (28.6\%) maximum clique instances. It is noteworthy that MSBCOL$^*$ solved instances to optimality considerably faster than IP as can be seen in instances mulsol.i.2.col and zeroin.i.2.col, which were solved by MSBCOL$^*$ in around 1.0 second, meanwhile IP took over 2000.0 seconds to solve them.

 Overall, the results show that MSBCOL can generate solutions very quickly, which is advantageous in cases where one values performance over optimality. Additionally, both MSBCOL$^+$ and MSBCOL$^*$ accomplished to improve MSBCOL results. Even though MSBCOL$^+$ was outperformed by MSBCOL$^*$ and IP in terms of solution values, it presents lower computational times and the idea could be heuristically adapted and used in a combinatorial local search strategy to achieve even better solutions. MSBCOL$^*$ and IP obtained better known solutions for the majority of instances, but it is worth mentioning that they are more viable options when larger computational times are available. One can observe that the optimal solutions found by MSBCOL$^*$ and IP show that for the tested small instances the $b$-chromatic number is equal or very close to the upper bound $m(G)$. Analyzing the performances of MSBCOL$^*$ and IP, MSBCOL$^*$ has much lower computational times in general and optimally solved more instances than IP, which suggests the usefulness of initial solutions provided by MSBCOL.


\subsection{Large instances}\label{sec:large}

\rafaelC{
Tables~\ref{table:largebip1} and \ref{table:largerand1} report the results for MSBCOL, MSBCOL$^+$, MSBCOL$^*$, and IP on the new set of more challenging instances composed of large bipartite and random graphs. The structure of these tables is similar to that of Tables~\ref{table:bip1}-\ref{table:rand1}. Note that large geometric instances were not tested as it was observed for the small instances in Section~\ref{sec:small} that they are much easier to solve than the bipartite and random ones. 
}

\begin{landscape}
\begin{table}[H]
\color{black}
\centering
\small
\caption{Results for MSBCOL conducted on large bipartite graphs.}
\begin{tabular}{lccc|cccc|cc|ccc|ccc}
\hline
\multicolumn{1}{l}{Instance group} & \multicolumn{3}{c|}{} & \multicolumn{4}{c|}{MSBCOL} & \multicolumn{2}{c|}{MSBCOL$^+$} & \multicolumn{3}{c|}{MSBCOL$^*$} & \multicolumn{3}{c}{IP}\\
 & $|V|$ & $|E|$ & $m(G)$ & $z_M$ & {$z_{avg}$} &  time(s) & $\%_{best}$ & $z_{M^+}$ & time(s) & $z_{M^*}$ & time(s) & gap(\%) & $z_{IP}$ & time(s) & gap(\%) \\
 \hline
bip\_500\_0.2 & 500 & 12,473.0  & 58.4  & 28.6  & 25.8 & 3.1  & 21.4 & {28.6} & \textit{\color{black}t.l.} & {28.6} & \textit{\color{black}t.l.} & {1,648.8} & \textbf{36.4} & \textit{\color{black}t.l.} & {1,274.7}  \\
bip\_500\_0.4 & 500 & 24,970.0  & 107.0 & \textbf{44.8}  & 39.9 & 5.3  & 0.0 & \textbf{44.8} & \textit{\color{black}t.l.} & \textbf{44.8} & \textit{\color{black}t.l.} & {1,016.2} & \textit{\color{black}n/a}                      & \textit{\color{black}n/a}                & \textit{\color{black}n/a}                          \\
bip\_500\_0.6 & 500 & 37,500.4  & 155.0 & \textbf{62.4}  & 49.3 & 7.5  & 0.0 & \textbf{62.4} & \textit{\color{black}t.l.} & \textbf{62.4} & \textit{\color{black}t.l.} & {701.4}  & \textit{\color{black}n/a}                      & \textit{\color{black}n/a}                & \textit{\color{black}n/a}                          \\
bip\_500\_0.8 & 500 & 49,961.0  & 202.8 & \textbf{72.4}  & 48.8 & 8.6  & 0.0 & \textit{\color{black}n/a}                      & \textit{\color{black}n/a}                & \textbf{72.4} & \textit{\color{black}t.l.} & {591.0}  & \textit{\color{black}n/a}                      & \textit{\color{black}n/a}                & \textit{\color{black}n/a}                          \\
bip\_600\_0.2 & 600 & 17,989.6  & 69.4  & \textbf{32.0}  & 29.5 & 4.5  & 0.0 & \textbf{32.0} & \textit{\color{black}t.l.} & \textbf{32.0} & \textit{\color{black}t.l.} & {1,775.0} & \textit{\color{black}n/a}                      & \textit{\color{black}n/a}                & \textit{\color{black}n/a}                          \\
bip\_600\_0.4 & 600 & 35,968.8  & 127.8 & \textbf{52.0}  & 45.7 & 8.4  & 0.0 & \textit{\color{black}n/a}                      & \textit{\color{black}n/a}                & \textit{\color{black}n/a}                      & \textit{\color{black}n/a}                & \textit{\color{black}n/a}                        & \textit{\color{black}n/a}                      & \textit{\color{black}n/a}                & \textit{\color{black}n/a}                          \\
bip\_600\_0.6 & 600 & 54,006.0  & 185.4 & \textbf{72.2}  & 57.3 & 11.4 & 0.0 & \textit{\color{black}n/a}                      & \textit{\color{black}n/a}                & \textit{\color{black}n/a}                      & \textit{\color{black}n/a}                & \textit{\color{black}n/a}                        & \textit{\color{black}n/a}                      & \textit{\color{black}n/a}                & \textit{\color{black}n/a}                          \\
bip\_600\_0.8 & 600 & 71,938.4  & 243.0 & \textbf{84.0}  & 56.5 & 13.4 & 0.0 & \textbf{84.0} & \textit{\color{black}t.l.} & \textbf{84.0} & \textit{\color{black}t.l.} & {614.9}  & \textit{\color{black}n/a}                      & \textit{\color{black}n/a}                & \textit{\color{black}n/a}                          \\
bip\_700\_0.2 & 700 & 24,559.2  & 80.6  & \textbf{36.2}  & 32.9 & 6.2  & 0.0 & \textit{\color{black}n/a}                      & \textit{\color{black}n/a}                & \textit{\color{black}n/a}                      & \textit{\color{black}n/a}                & \textit{\color{black}n/a}                        & \textit{\color{black}n/a}                      & \textit{\color{black}n/a}                & \textit{\color{black}n/a}                          \\
bip\_700\_0.4 & 700 & 49,056.2  & 149.2 & \textbf{58.6}  & 51.7 & 11.2 & 0.0 & \textit{\color{black}n/a}                      & \textit{\color{black}n/a}                & \textit{\color{black}n/a}                      & \textit{\color{black}n/a}                & \textit{\color{black}n/a}                        & \textit{\color{black}n/a}                      & \textit{\color{black}n/a}                & \textit{\color{black}n/a}                          \\
bip\_700\_0.6 & 700 & 73,554.2  & 217.0 & \textbf{82.2}  & 63.5 & 16.9 & 0.0 & \textit{\color{black}n/a}                      & \textit{\color{black}n/a}                & \textit{\color{black}n/a}                      & \textit{\color{black}n/a}                & \textit{\color{black}n/a}                        & \textit{\color{black}n/a}                      & \textit{\color{black}n/a}                & \textit{\color{black}n/a}                          \\
bip\_700\_0.8 & 700 & 97,935.4  & 283.8 & \textbf{98.0}  & 63.9 & 20.0 & 0.0 & \textit{\color{black}n/a}                      & \textit{\color{black}n/a}                & \textit{\color{black}n/a}                      & \textit{\color{black}n/a}                & \textit{\color{black}n/a}                        & \textit{\color{black}n/a}                      & \textit{\color{black}n/a}                & \textit{\color{black}n/a}                          \\
bip\_800\_0.2 & 800 & 32,040.8  & 90.8  & \textbf{39.2}  & 35.9 & 8.3  & 0.0 & \textit{\color{black}n/a}                      & \textit{\color{black}n/a}                & \textit{\color{black}n/a}                      & \textit{\color{black}n/a}                & \textit{\color{black}n/a}                        & \textit{\color{black}n/a}                      & \textit{\color{black}n/a}                & \textit{\color{black}n/a}                          \\
bip\_800\_0.4 & 800 & 63,974.2  & 169.2 & \textbf{64.4}  & 56.0 & 16.7 & 0.0 & \textit{\color{black}n/a}                      & \textit{\color{black}n/a}                & \textit{\color{black}n/a}                      & \textit{\color{black}n/a}                & \textit{\color{black}n/a}                        & \textit{\color{black}n/a}                      & \textit{\color{black}n/a}                & \textit{\color{black}n/a}                          \\
bip\_800\_0.6 & 800 & 95,985.8  & 246.6 & \textbf{91.0}  & 71.4 & 22.6 & 0.0 & \textit{\color{black}n/a}                      & \textit{\color{black}n/a}                & \textit{\color{black}n/a}                      & \textit{\color{black}n/a}                & \textit{\color{black}n/a}                        & \textit{\color{black}n/a}                      & \textit{\color{black}n/a}                & \textit{\color{black}n/a}                          \\
bip\_800\_0.8 & 800 & 127,880.0 & 324.0 & \textbf{109.4} & 69.5 & 27.4 & 0.0 & \textit{\color{black}n/a}                      & \textit{\color{black}n/a}                & \textit{\color{black}n/a}                      & \textit{\color{black}n/a}                & \textit{\color{black}n/a}                        & \textit{\color{black}n/a}                      & \textit{\color{black}n/a}                & \textit{\color{black}n/a}                        \\
\hline
\# best & & & & 15/16 & & & & 4/16 & & 5/16 & & & 1/16   &  \\ 
\# opt & & & & & & & & & & & & 0/16 & & & 0/16 \\ 
\hline
\end{tabular}
\label{table:largebip1}
\end{table}

\begin{table}[H]
\color{black}
\centering
\small
\caption{Experiments conducted on large random graphs.}
\begin{tabular}{lccc|cccc|cc|ccc|ccc}
\hline
\multicolumn{1}{l}{Instance group} & \multicolumn{3}{c|}{} & \multicolumn{4}{c|}{MSBCOL} & \multicolumn{2}{c|}{MSBCOL$^+$} & \multicolumn{3}{c|}{MSBCOL$^*$} & \multicolumn{3}{c}{IP}\\
 & $|V|$ & $|E|$ & $m(G)$ & $z_M$ & {$z_{avg}$} &  time(s) & $\%_{best}$ & $z_{M^+}$ & time(s) & $z_{M^*}$ & time(s) & gap(\%) & $z_{IP}$ & time(s) & gap(\%) \\
 \hline
rand\_500\_0.2 & 500 & 24,964.4  & 107.6 & 42.8  & 39.4  & 5.8  & 15.7 & 42.8                    & \textit{\color{black}t.l.} & 42.8                    & \textit{\color{black}t.l.}         & 1,068.3                  & \textbf{50.8}  & \textit{\color{black}t.l.} & {897.9} \\
rand\_500\_0.4 & 500 & 49,845.8  & 203.0 & 71.8  & 67.1  & 11.8 & 12.0 & 71.8                    & \textit{\color{black}t.l.} & 71.8                    & \textit{\color{black}t.l.}         & 596.4                   & \textbf{81.6}  & \textit{\color{black}t.l.} & {513.0} \\
rand\_500\_0.6 & 500 & 74,751.0  & 297.6 & \textbf{105.6} & 99.4  & 17.2 & 0.0 & \textbf{105.6}                   & \textit{\color{black}t.l.} & {\textit{\color{black}n/a}} & \textit{\color{black}n/a}                        & {\textit{\color{black}n/a}} & \textit{\color{black}n/a}                       & \textit{\color{black}n/a}                & \textit{\color{black}n/a}                       \\
rand\_500\_0.8 & 500 & 99,755.2  & 393.2 & 155.4 & 145.8 & 19.0 & 0.6 & \textbf{156.4}                   & \textit{\color{black}t.l.} & 155.4                   & \textit{\color{black}t.l.}         & 106.8                   & {146.0} & \textit{\color{black}t.l.} & {123.5} \\
rand\_600\_0.2 & 600 & 36,019.0  & 128.6 & \textbf{49.0}  & 45.5  & 8.9  & 0.0 & \textbf{49.0}                     & \textit{\color{black}t.l.} & \textbf{49.0}                    & \textit{\color{black}t.l.}         & 1,124.5                  & \textit{\color{black}n/a}                       & \textit{\color{black}n/a}                & \textit{\color{black}n/a}                       \\
rand\_600\_0.4 & 600 & 71,805.4  & 243.2 & \textbf{83.0}  & 77.9  & 18.3 & 0.0 & \textbf{83.0}                      & \textit{\color{black}t.l.} & \textbf{83.0}                    & \textit{\color{black}t.l.}         & 622.9                   & \textit{\color{black}n/a}                       & \textit{\color{black}n/a}                & \textit{\color{black}n/a}                       \\
rand\_600\_0.6 & 600 & 107,681.0 & 357.2 & 122.8 & 116.1 & 26.0 & 3.9 & \textit{\color{black}n/a} & \textit{\color{black}n/a}                & 122.8                   & \textit{\color{black}t.l.}         & 388.6                   & \textbf{127.8} & \textit{\color{black}t.l.} & {371.8} \\
rand\_600\_0.8 & 600 & 143,733.0 & 472.4 & \textbf{181.2} & 170.8 & 29.5 & 0.0 & \textbf{181.2}                   & \textit{\color{black}t.l.} & \textbf{181.2}                  & \textit{\color{black}t.l.}         & 231.1                   & \textit{\color{black}n/a}                       & \textit{\color{black}n/a}                & \textit{\color{black}n/a}                       \\
rand\_700\_0.2 & 700 & 48,992.8  & 149.0 & \textbf{55.0}  & 50.9  & 12.4 & 0.0 & {\textit{\color{black}n/a}} & \textit{\color{black}n/a}                & {\textit{\color{black}n/a}} & \textit{\color{black}n/a}                        & {\textit{\color{black}n/a}} & \textit{\color{black}n/a}                       & \textit{\color{black}n/a}                & \textit{\color{black}n/a}                       \\
rand\_700\_0.4 & 700 & 97,795.2  & 283.4 & \textbf{94.0}  & 88.5  & 25.8 & 0.0 & {\textit{\color{black}n/a}} & \textit{\color{black}n/a}                & {\textit{\color{black}n/a}} & \textit{\color{black}n/a}                        & {\textit{\color{black}n/a}} & \textit{\color{black}n/a}                       & \textit{\color{black}n/a}                & \textit{\color{black}n/a}                       \\
rand\_700\_0.6 & 700 & 146,642.0 & 417.2 & \textbf{139.4} & 132.0 & 39.2 & 0.0 & \textbf{139.4}                   & \textit{\color{black}t.l.} & \textbf{139.4}                   & \textit{\color{black}t.l.}         & 402.2                   & \textit{\color{black}n/a}                       & \textit{\color{black}n/a}                & \textit{\color{black}n/a}                       \\
rand\_700\_0.8 & 700 & 195,652.0 & 551.6 & \textbf{206.4} & 194.6 & 44.1 & 0.0 & \textbf{206.4}                   & \textit{\color{black}t.l.} & \textbf{206.4}                   & \textit{\color{black}t.l.}         & 239.1                   & \textit{\color{black}n/a}                       & \textit{\color{black}n/a}                & \textit{\color{black}n/a}                       \\
rand\_800\_0.2 & 800 & 63,928.6  & 169.4 & \textbf{60.6}  & 56.3  & 16.7 & 0.0 & {\textit{\color{black}n/a}} & \textit{\color{black}n/a}                &{\textit{\color{black}n/a}} & \textit{\color{black}n/a}                        & {\textit{\color{black}n/a}} & \textit{\color{black}n/a}                       & \textit{\color{black}n/a}                & \textit{\color{black}n/a}                       \\
rand\_800\_0.4 & 800 & 127,726.0 & 323.6 & \textbf{105.0} & 98.0  & 38.6 & 0.0 & {\textit{\color{black}n/a}} & \textit{\color{black}n/a}                & {\textit{\color{black}n/a}} & \textit{\color{black}n/a}                        & {\textit{\color{black}n/a}} & \textit{\color{black}n/a}                       & \textit{\color{black}n/a}                & \textit{\color{black}n/a}                       \\
rand\_800\_0.6 & 800 & 191,557.0 & 476.8 & \textbf{156.0} & 147.9 & 54.1 & 0.0 & \textbf{156.0}                   & \textit{\color{black}t.l.} & {\textit{\color{black}n/a}} & \textit{\color{black}n/a}                        & {\textit{\color{black}n/a}} & \textit{\color{black}n/a}                       & \textit{\color{black}n/a}                & \textit{\color{black}n/a}                       \\
rand\_800\_0.8 & 800 & 255,578.0 & 630.6 & \textbf{230.2} & 218.0 & 63.9 & 0.0 & \textbf{230.2} & \textit{\color{black}t.l.} & \textbf{230.2} & \textit{\color{black}t.l.} & 247.5                   & \textit{\color{black}n/a}                       & \textit{\color{black}n/a}                & \textit{\color{black}n/a}                      \\
\hline
\# best & & & & 12/16 & & & & 9/16 & & 6/16 & & & 3/16  & &  \\
\# opt & & & & & & & & & & & & 0/16 & & & 0/16 \\
\hline
\end{tabular}
\label{table:largerand1}
\end{table}

\end{landscape}

\rafaelC{
The results show that even though the instances are considerably large, MSBCOL can still generate solutions in low computation times. More specifically, its running time was within 30 seconds for the bipartite and within 64 seconds for the random graphs. 
On the other hand, the results also show that, differently from what happened for the small bipartite and random instances, MSBCOL$^+$ and MSBCOL$^*$ were not able to consistently improve the quality of the solutions obtained by MSBCOL. Besides, for all the executions of MSBCOL$^+$, MSBCOL$^*$, and IP, either the time limit was reached or the execution was halted due to memory limitations.
In that sense, the empirical results show that these two new sets of instances appear to be very challenging for the $b$-coloring problem.
}

\rafaelC{Furthermore, note that IP was only able to finish its execution without being halted for bipartite instances with 500 vertices and random instances  with less than 600 vertices. It is noteworthy that, for most of the cases in which IP reached the time limit, the obtained solutions could improve those achieved by MSBCOL, the only exception being random\_500\_0.8. This observation leads to the conclusion that, even though MSBCOL can generate solutions of reasonable quality for these large challenging instances in low computational times, there is still space for improvements.}

Tables~\ref{table:dimacscol2}-\ref{table:dimacsclq4} report the results for MSBCOL, MSBCOL$^+$, MSBCOL$^*$ and IP on the set of large graphs from the Second DIMACS Implementation Challenge. The structure of these tables is identical to Tables~\ref{table:dimacscol1}-\ref{table:dimacsclq1}.

The results show that MSBCOL presented very good results when compared to the other approaches, as the multi-start approach achieved 15 out of 32 (46.9\%) best known solutions for graph coloring instances (Table~\ref{table:dimacscol2}). Regarding maximum clique instances (Table~\ref{table:dimacsclq4}), MSBCOL obtained 34 out of 64 (53.1\%) best known values. We highlight instances in which the reported gaps of the solutions were within 5.0\% of the best known: dsjc500.5.col, flat300\_28\_0.col, inithx.i.1.col, inithx.i.2.col, inithx.i.3.col, le450\_15d.col, r1000.1.col, brock400\_1.clq, brock400\_2.clq, brock400\_3.clq, brock400\_4.clq, hamming10-2.clq, hamming10-4.clq, hammin8-2.clq, san200\_0.9\_2.clq, san200\_0.9\_3.clq, and san400\_0.9\_1.clq. For the remaining instances, the solutions found by MSBCOL were all within 28.0\% of the best reported values. The reported times for graph coloring instances were all below 3.0 minutes, which is an impressive performance considering that the largest instance in this set (R1000.1c.col) has 1000 vertices and 485,090 edges. Additionally, MSBCOL also executed in less than 3.0 minutes for most maximum clique instances,  with the few exceptions being graphs whose number of vertices are at least 1500 or number of edges are over 800,000 (C2000.5.clq, C2000.9.clq, C4000.5.clq, keller6.clq, MANNa81.clq, phat1500-1.clq, phat1500-2.clq and phat1500-3.clq).

The results show that MSBCOL$^+$ reasonably improved solutions from MSBCOL, as the method enhanced 10 out of 32 (31.3\%) solutions for graph coloring instances, and with respect to maximum clique instances, MSBCOL$^+$ improved 26 out of 64 (37.5\%) solution values. The most remarkable improvements obtained by MSBCOL$^+$ can be seen in DSJC500.9.col, DSJR500.5.col, R1000.1c.col, C500.9.clq, gen400\_p0.9\_55.clq, gen400\_p0.9\_65.clq and gen400\_p0.9\_75.clq. For the previous mentioned instances, MSBCOL$^+$ returned a solution with at least 30 colors more when compared with the initial provided by MSBCOL, which is a strong indication of the advantage in applying such method. Moreover, MSBCOL$^+$ encountered best known solutions for 16 out of 32 (50.0\%) graph coloring instances and 37 out of 64 (57.8\%) for maximum clique instances.
 
Contrasting with the previous behaviour for small instances, when applied to large instances MSBCOL$^+$ presented slightly superior results than MSBCOL$^*$, as the latter was successful in improving solutions for 8 out of 32 (25.0\%) graph coloring instances and 18 out of 64 (28.1\%) maximum clique instances. In terms of solution values, MSBCOL$^*$ returned best known results for 15 out of 32 (46.9\%) graph coloring instances and 33 out of 64 (51.6\%) maximum clique instances. Results also show that MSBCOL$^*$ displayed difficulty in solving larger instances to optimality, as the method only solved 5 for each graph coloring (15.6\%) and maximum clique (7.8\%) instances. These results indicate the difficulty of the MIP solver in solving a problem when the number of variables increase substantially, which explains better results when the fix-and-optimize approach MSBCOL$^+$ was employed.

\begin{landscape}

\small 
\begin{longtable}{lccc|cccc|cc|ccc|ccc}
\caption{Experiments conducted on large DIMACS graphs for coloring problems.}
\label{table:dimacscol2}
\\
\hline
\multicolumn{1}{l}{Instance name} & \multicolumn{3}{c|}{} & \multicolumn{4}{c|}{MSBCOL} & \multicolumn{2}{c|}{MSBCOL$^+$} & \multicolumn{3}{c|}{MSBCOL$^*$} & \multicolumn{3}{c}{IP} \\
 & $|V|$ & $|E|$ & $m(G)$ & $z_M$ & {$z_{avg}$} & time(s) & $\%_{best}$ & $z_{M^+}$ & time(s) & $z_{M^*}$ & time(s) & gap(\%) & $z_{IP}$ & time(s) & gap(\%) \\
 \hline
dsjc1000.1.col	&	1,000	&	49,629	&	112	&	\textbf{44}	& {41.6} &	12.2	&	0.0	&	\textit{\color{black}n/a}	&	\textit{\color{black}n/a}	&	\textit{\color{black}n/a}	&	\textit{\color{black}n/a}	&	\textit{\color{black}n/a}	&	\textit{\color{black}n/a}	&	\textit{\color{black}n/a}	&	\textit{\color{black}n/a}	\\
dsjc1000.5.col	&	1,000	&	249,826	&	501	&	\textbf{154}	& {147} &	74.7	&	0.0	&	\textit{\color{black}n/a}	&	\textit{\color{black}n/a}	&	\textit{\color{black}n/a}	&	\textit{\color{black}n/a}	&	\textit{\color{black}n/a}	&	\textit{\color{black}n/a}	&	\textit{\color{black}n/a}	&	\textit{\color{black}n/a}	\\
dsjc1000.9.col	&	1,000	&	449,449	&	888	&	\textbf{349}	& {336} &	96.2	&	0.0	&	\textbf{349}	&	\textit{\color{black}t.l.}	&	\textbf{349}	&	\textit{\color{black}t.l.}	&	186.5	&	\textit{\color{black}n/a}	&	\textit{\color{black}n/a}	&	\textit{\color{black}n/a}	\\
dsjc250.5.col	&	250	&	15,668	&	126	&	51	& {46.9} &	2.8	&	10.5	&	51	&	\textit{\color{black}t.l.}	&	51	&	\textit{\color{black}t.l.}	&	194.3	&	\textbf{57}	&	\textit{\color{black}t.l.}	&	163.4	\\
dsjc250.9.col	&	250	&	27,897	&	219	&	110	& {101.8} &	3.0	&	13.4	&	125	&	2.0	&	\textbf{127}	&	\textit{\color{black}t.l.}	&	22.8	&	\textbf{127}	&	\textit{\color{black}t.l.}	&	21.9	\\
dsjc500.1.col	&	500	&	12,458	&	59	&	29	& {25.6} &	2.9	&	19.4	&	29	&	\textit{\color{black}t.l.}	&	29	&	\textit{\color{black}t.l.}	&	1624.1	&	\textbf{36}	&	\textit{\color{black}t.l.}	&	1288.9	\\
dsjc500.5.col	&	500	&	62,624	&	251	&	88	& {82.8} &	14.1	&	7.4	&	88	&	\textit{\color{black}t.l.}	&	88	&	\textit{\color{black}t.l.}	&	468.2	&	\textbf{95}	&	\textit{\color{black}t.l.}	&	426.3	\\
dsjc500.9.col	&	500	&	112,437	&	443	&	196	& {184.4} &	16.4	&	21.3	&	\textbf{249}	&	427.0	&	196	&	\textit{\color{black}t.l.}	&	67.1	&	180	&	\textit{\color{black}t.l.}	&	82.0	\\
dsjr500.1c.col	&	500	&	121,275	&	478	&	114	& {108.6} &	19.3	&	19.7	&	135	&	1.0	&	\textbf{142}	&	\textit{\color{black}t.l.}	&	43.4	&	141	&	\textit{\color{black}t.l.}	&	45.6	\\
dsjr500.5.col	&	500	&	58,862	&	234	&	183	& {19.8} &	24.3	&	17.2	&	\textbf{221}	&	\textit{\color{black}t.l.}	&	183	&	\textit{\color{black}t.l.}	&	112.6	&	150	&	\textit{\color{black}t.l.}	&	233.3	\\
flat1000\_50\_0.col	&	1,000	&	245,000	&	492	&	\textbf{153}	& {144.1} &	69.7	&	0.0	&	\textit{\color{black}n/a}	&	\textit{\color{black}n/a}	&	\textit{\color{black}n/a}	&	\textit{\color{black}n/a}	&	\textit{\color{black}n/a}	&	\textit{\color{black}n/a}	&	\textit{\color{black}n/a}	&	\textit{\color{black}n/a}	\\
flat1000\_60\_0.col	&	1,000	&	245,830	&	493	&	\textbf{152}	& {144.6} &	71.1	&	0.0	&	\textit{\color{black}n/a}	&	\textit{\color{black}n/a}	&	\textit{\color{black}n/a}	&	\textit{\color{black}n/a}	&	\textit{\color{black}n/a}	&	\textit{\color{black}n/a}	&	\textit{\color{black}n/a}	&	\textit{\color{black}n/a}	\\
flat1000\_76\_0.col	&	1,000	&	246,708	&	494	&	\textbf{153}	& {145.7} &	71.9	&	0.0	&	\textit{\color{black}n/a}	&	\textit{\color{black}n/a}	&	\textit{\color{black}n/a}	&	\textit{\color{black}n/a}	&	\textit{\color{black}n/a}	&	\textit{\color{black}n/a}	&	\textit{\color{black}n/a}	&	\textit{\color{black}n/a}	\\
flat300\_20\_0.col	&	300	&	21,375	&	144	&	\textbf{56}	& {51.9} &	3.8	&	0.0	&	\textbf{56}	&	\textit{\color{black}t.l.}	&	\textbf{56}	&	\textit{\color{black}t.l.}	&	219.5	&	\textbf{56}	&	\textit{\color{black}t.l.}	&	219.5	\\
flat300\_26\_0.col	&	300	&	21,633	&	146	&	57	& {52.6} &	3.7	&	10.9	&	57	&	\textit{\color{black}t.l.}	&	57	&	\textit{\color{black}t.l.}	&	214.4	&	\textbf{64}	&	\textit{\color{black}t.l.}	&	180.0	\\
flat300\_28\_0.col	&	300	&	21,695	&	146	&	57	& {52.8} &	3.9	&	9.5	&	57	&	\textit{\color{black}t.l.}	&	57	&	\textit{\color{black}t.l.}	&	214.5	&	\textbf{63}	&	\textit{\color{black}t.l.}	&	376.2	\\
fpsol2.i.1.col	&	496	&	11,654	&	79	&	\textbf{77}	& {75.0} &	5.8	&	0.0	&	\textbf{77}	&	7.0	&	\textbf{77}	&	7.0	&	-	&	65	&	\textit{\color{black}t.l.}	&	84.9	\\
inithx.i.1.col	&	864	&	18,707	&	74	&	71	& {70.8} &	32.6	&	1.4	&	71	&	30.0	&	\textbf{72}	&	26.0	&	-	&	\textit{\color{black}n/a}	&	\textit{\color{black}n/a}	&	\textit{\color{black}n/a}	\\
inithx.i.2.col	&	645	&	13,979	&	52	&	49	& {49.0} &	29.1	&	2.0	&	\textbf{50}	&	9.0	&	\textbf{50}	&	10.0	&	-	&	\textit{\color{black}n/a}	&	\textit{\color{black}n/a}	&	\textit{\color{black}n/a}	\\
inithx.i.3.col	&	621	&	13,969	&	52	&	49	& {49.0} & 	27.9	&	2.0	&	\textbf{50}	&	9.0	&	\textbf{50}	&	10.0	&	-	&	38	&	\textit{\color{black}t.l.}	&	1371.0	\\
latin\_square\_10.col	&	900	&	307,350	&	684	&	\textbf{183}	& {174.3} &	75.4	&	0.0	&	\textbf{183}	&	\textit{\color{black}t.l.}	&	\textbf{183}	&	\textit{\color{black}t.l.}	&	391.8	&	\textit{\color{black}n/a}	&	\textit{\color{black}n/a}	&	\textit{\color{black}n/a}	\\
le450\_15c.col	&	450	&	16,680	&	93	&	\textbf{41}	& {38.4} &	6.4	&	0.0	&	\textbf{41}	&	\textit{\color{black}t.l.}	&	\textbf{41}	&	\textit{\color{black}t.l.}	&	948.8	&	34	&	\textit{\color{black}t.l.}	&	1223.5	\\
le450\_15d.col	&	450	&	16,750	&	92	&	42	& {38.9} &	6.2	&	8.7	&	42	&	\textit{\color{black}t.l.}	&	42	&	\textit{\color{black}t.l.}	&	911.9	&	\textbf{46}	&	\textit{\color{black}t.l.}	&	878.3	\\
le450\_25c.col	&	450	&	17,343	&	101	&	\textbf{48}	& {44.6} &	8.7	&	0.0	&	\textbf{48}	&	\textit{\color{black}t.l.}	&	\textbf{48}	&	\textit{\color{black}t.l.}	&	687.5	&	40	&	\textit{\color{black}t.l.}	&	1025.0	\\
le450\_25d.col	&	450	&	17,425	&	99	&	\textbf{48}	& {44.9} &	7.2	&	0.0	&	\textbf{48}	&	\textit{\color{black}t.l.}	&	\textbf{48}	&	\textit{\color{black}t.l.}	&	693.8	&	\textit{\color{black}n/a}	&	\textit{\color{black}n/a}	&	\textit{\color{black}n/a}	\\
r1000.1c.col	&	1,000	&	485,090	&	957	&	156	& {146.0} &	154.6	&	27.4	&	\textbf{215}	&	\textit{\color{black}t.l.}	&	163	&	\textit{\color{black}t.l.}	&	183.0	&	162	&	\textit{\color{black}t.l.}	&	181.8	\\
r1000.1.col	&	1,000	&	14,378	&	41	&	35	& {32.1} &	4.0	&	2.8	&	\textbf{36}	&	316.0	&	35	&	\textit{\color{black}t.l.}	&	217.1	&	\textit{\color{black}n/a}	&	\textit{\color{black}n/a}	&	\textit{\color{black}n/a}	\\
r1000.5.col	&	1,000	&	238,267	&	472	&	\textbf{368}	& {360.2} &	168.1	&	0.0	&	\textit{\color{black}n/a}	&	\textit{\color{black}n/a}	&	\textit{\color{black}n/a}	&	\textit{\color{black}n/a}	&	\textit{\color{black}n/a}	&	\textit{\color{black}n/a}	&	\textit{\color{black}n/a}	&	\textit{\color{black}n/a}	\\
r250.1c.col	&	250	&	30,227	&	238	&	75	& {71.3} &	3.1	&	12.8	&	81	&	\textit{\color{black}t.l.}	&	\textbf{86}	&	58.0	&	-	&	\textbf{86}	&	35.0	&	-	\\
r250.5.col	&	250	&	14,849	&	119	&	100	& {94.9} &	3.7	&	13.8	&	\textbf{116}	&	3328.0	&	112	&	\textit{\color{black}t.l.}	&	11.6	&	102	&	\textit{\color{black}t.l.}	&	46.5	\\
school1.col	&	385	&	19,095	&	117	&	\textbf{58}	& {52.5} &	14.1	&	0.0	&	\textbf{58}	&	\textit{\color{black}t.l.}	&	\textbf{58}	&	\textit{\color{black}t.l.}	&	443.1	&	\textit{\color{black}n/a}	&	\textit{\color{black}n/a}	&	\textit{\color{black}n/a}	\\
school1\_nsh.col	&	352	&	14,612	&	101	&	\textbf{49}	& {46.2} &	9.2	&	0.0	&	\textbf{49}	&	\textit{\color{black}t.l.}	&	\textbf{49}	&	\textit{\color{black}t.l.}	&	504.1	&	36	&	\textit{\color{black}t.l.}	&	877.8	\\ \hline
\# best & & & & 15/32 & & & & 16/32 & & 15/32 & & & 9/32 & &  \\
\# opt & & & & & & & & & & & & 5/32 & & & 1/32 \\ 
 \hline
\end{longtable}
\end{landscape}

\begin{landscape}
\scriptsize
\begin{longtable}{lccc|cccc|cc|ccc|ccc}
\caption{Experiments conducted on large DIMACS graphs for the maximum clique problem.}
\label{table:dimacsclq4}
\\
\hline
\multicolumn{1}{l}{Instance name} & \multicolumn{3}{c|}{} & \multicolumn{4}{c|}{MSBCOL} & \multicolumn{2}{c|}{MSBCOL$^+$} & \multicolumn{3}{c|}{MSBCOL$^*$} & \multicolumn{3}{c}{IP} \\ 
 & $|V|$ & $|E|$ & $m(G)$ & $z_M$ & {$z_{avg}$} &  time(s) & $\%_{best}$ & $z_{M^+}$ & time(s) & $z_{M^*}$ & time(s) & gap(\%) & $z_{IP}$ & time(s) & gap(\%) \\ \hline \endfirsthead
 
 \multicolumn{15}{c}%
{{\tablename\ \thetable{}: continued from previous page}} \\
 \hline
\multicolumn{1}{l}{Instance name} & \multicolumn{3}{c|}{} & \multicolumn{4}{c|}{MSBCOL} & \multicolumn{2}{c|}{MSBCOL$^+$} & \multicolumn{3}{c|}{MSBCOL$^*$} & \multicolumn{3}{c}{IP} \\ 
 & $|V|$ & $|E|$ & $m(G)$ & $z_M$ & {$z_{avg}$} & time(s) & $\%_{best}$ & $z_{M^+}$ & time(s) & $z_{M^*}$ & time(s) & gap(\%) & $z_{IP}$ & time(s) & gap(\%) \\ \hline \endhead
 
 \hline \multicolumn{15}{r}{{Continued on next page}} \\ \hline
\endfoot

\endlastfoot

 brock200\_1.clq	&	200	&	14,834	&	146	&	64	& {60.2} &	2.1	&	12.3	&	\textbf{73}	&	\textit{\color{black}t.l.}	&	67	&	\textit{\color{black}t.l.}	&	83.8	&	70	&	\textit{\color{black}t.l.}	&	76.7	\\
brock200\_3.clq	&	200	&	12,048	&	120	&	51	& {47.3} &	1.8	&	10.5	&	\textbf{57}	&	\textit{\color{black}t.l.}	&	51	&	\textit{\color{black}t.l.}	&	139.4	&	56	&	\textit{\color{black}t.l.}	&	115.1	\\
brock200\_4.clq	&	200	&	13,089	&	129	&	56	& {51.7} &	2.0	&	11.1	&	62	&	\textit{\color{black}t.l.}	&	60	&	\textit{\color{black}t.l.}	&	103.2	&	\textbf{63}	&	\textit{\color{black}t.l.}	&	93.3	\\
brock400\_1.clq	&	400	&	59,723	&	294	&	115	& {109.1} &	10.0	&	6.5	&	\textbf{123}	&	\textit{\color{black}t.l.}	&	115	&	\textit{\color{black}t.l.}	&	121.3	&	119	&	\textit{\color{black}t.l.}	&	113.8	\\
brock400\_2.clq	&	400	&	59,786	&	295	&	115	& {108.3} &	11.0	&	5.0	&	115	&	\textit{\color{black}t.l.}	&	115	&	\textit{\color{black}t.l.}	&	121.3	&	\textbf{121}	&	\textit{\color{black}t.l.}	&	110.3	\\
brock400\_3.clq	&	400	&	59,681	&	294	&	116	& {108.4} &	10.6	&	5.7	&	116	&	\textit{\color{black}t.l.}	&	116	&	\textit{\color{black}t.l.}	&	119.3	&	\textbf{123}	&	\textit{\color{black}t.l.}	&	106.8	\\
brock400\_4.clq	&	400	&	59,765	&	295	&	116	& {108.7} &	10.8	&	7.2	&	116	&	\textit{\color{black}t.l.}	&	116	&	\textit{\color{black}t.l.}	&	119.4	&	\textbf{125}	&	\textit{\color{black}t.l.}	&	103.6	\\
brock800\_1.clq	&	800	&	207,505	&	515	&	\textbf{172}	& {161.8} &	57.9	&	0.0	&	\textbf{172}	&	\textit{\color{black}t.l.}	&	\textbf{172}	&	\textit{\color{black}t.l.}	&	365.1	&	\textit{\color{black}n/a}	&	\textit{\color{black}n/a}	&	\textit{\color{black}n/a}	\\
brock800\_2.clq	&	800	&	208,166	&	517	&	\textbf{172}	& {162.8} &	57.4	&	0.0	&	\textbf{172}	&	\textit{\color{black}t.l.}	&	\textbf{172}	&	\textit{\color{black}t.l.}	&	365.1	&	\textit{\color{black}n/a}	&	\textit{\color{black}n/a}	&	\textit{\color{black}n/a}	\\
brock800\_3.clq	&	800	&	207,333	&	515	&	\textbf{171}	& {161.7} &	56.7	&	0.0	&	\textbf{171}	&	\textit{\color{black}t.l.}	&	\textbf{171}	&	\textit{\color{black}t.l.}	&	367.8	&	\textit{\color{black}n/a}	&	\textit{\color{black}n/a}	&	\textit{\color{black}n/a}	\\
brock800\_4.clq	&	800	&	207,643	&	515	&	\textbf{171}	& {162.8} &	56.2	&	0.0	&	\textbf{171}	&	\textit{\color{black}t.l.}	&	\textbf{171}	&	\textit{\color{black}t.l.}	&	367.8	&	\textit{\color{black}n/a}	&	\textit{\color{black}n/a}	&	\textit{\color{black}n/a}	\\
C1000.9.clq	&	1,000	&	450,079	&	889	&	\textbf{348}	& {336.7} &	96.9	&	0.0	&	\textbf{348}	&	\textit{\color{black}t.l.}	&	\textbf{348}	&	\textit{\color{black}t.l.}	&	187.4	&	\textit{\color{black}n/a}	&	\textit{\color{black}n/a}	&	\textit{\color{black}n/a}	\\
C2000.5.clq	&	2,000	&	999,836	&	1,000	&	\textbf{277}	& {263.8} &	487.8	&	0.0	&	\textit{\color{black}n/a}	&	\textit{\color{black}n/a}	&	\textit{\color{black}n/a}	&	\textit{\color{black}n/a}	&	\textit{\color{black}n/a}	&	\textit{\color{black}n/a}	&	\textit{\color{black}n/a}	&	\textit{\color{black}n/a}	\\
C2000.9.clq	&	2,000	&	1,799,532	&	1,784	&	\textbf{650}	& {630.3} &	701.8	&	0.0	&	\textit{\color{black}n/a}	&	\textit{\color{black}n/a}	&	\textit{\color{black}n/a}	&	\textit{\color{black}n/a}	&	\textit{\color{black}n/a}	&	\textit{\color{black}n/a}	&	\textit{\color{black}n/a}	&	\textit{\color{black}n/a}	\\
C250.9.clq	&	250	&	27,984	&	220	&	111	& {102.3} &	3.0	&	12.6	&	124	&	2.0	&	\textbf{127}	&	\textit{\color{black}t.l.}	&	22.4	&	\textbf{127}	&	\textit{\color{black}t.l.}	&	22.8	\\
C4000.5.clq	&	4,000	&	4,000,268	&	2,002	&	\textbf{496}	& {477.3} &	3494.9	&	0.0	&	\textit{\color{black}n/a}	&	\textit{\color{black}n/a}	&	\textit{\color{black}n/a}	&	\textit{\color{black}n/a}	&	\textit{\color{black}n/a}	&	\textit{\color{black}n/a}	&	\textit{\color{black}n/a}	&	\textit{\color{black}n/a}	\\
C500.9.clq	&	500	&	112,332	&	442	&	195	& {182.9} &	17.5	&	22.0	&	\textbf{250}	&	\textit{\color{black}t.l.}	&	195	&	\textit{\color{black}t.l.}	&	68.0	&	180	&	\textit{\color{black}t.l.}	&	82.0	\\
c-fat500-10.clq	&	500	&	46,627	&	188	&	\textbf{188}	& {188.0} &	$<$0.1	&	0.0	&	\textbf{188}	&	27.0	&	\textbf{188}	&	29.0	&	-	&	126	&	\textit{\color{black}t.l.}	&	296.8	\\
c-fat500-5.clq	&	500	&	23,191	&	95	&	\textbf{95}	& {95.0} &	$<$0.1	&	0.0	&	\textbf{95}	&	18.0	&	\textbf{95}	&	19.0	&	-	&	65	&	\textit{\color{black}t.l.}	&	669.2	\\
gen200\_p0.9\_44.clq	&	200	&	17,910	&	174	&	87	& {79.7} &	1.9	&	16.3	&	100	&	$<$0.1	&	103	&	\textit{\color{black}t.l.}	&	19.4	&	\textbf{104}	&	\textit{\color{black}t.l.}	&	19.1	\\
gen200\_p0.9\_55.clq	&	200	&	17,910	&	174	&	90	& {82.6} &	1.8	&	13.5	&	100	&	$<$0.1	&	\textbf{104}	&	\textit{\color{black}t.l.}	&	18.6	&	\textbf{104}	&	\textit{\color{black}t.l.}	&	18.5	\\
gen400\_p0.9\_55.clq	&	400	&	71,820	&	348	&	161	& {142.4} &	10.0	&	19.5	&	\textbf{200}	&	23.0	&	193	&	\textit{\color{black}t.l.}	&	29.5	&	193	&	\textit{\color{black}t.l.}	&	28.9	\\
gen400\_p0.9\_65.clq	&	400	&	71,820	&	350	&	168	& {148.8} &	9.9	&	16.0	&	\textbf{200}	&	9.0	&	199	&	\textit{\color{black}t.l.}	&	25.9	&	\textbf{200}	&	\textit{\color{black}t.l.}	&	24.6	\\
gen400\_p0.9\_75.clq	&	400	&	71,820	&	350	&	163	& {151.6} &	9.8	&	18.5	&	\textbf{200}	&	16.0	&	198	&	\textit{\color{black}t.l.}	&	26.0	&	\textbf{200}	&	\textit{\color{black}t.l.}	&	24.8	\\
hamming10-2.clq	&	1,024	&	518,656	&	1,014	&	522	& {517.0} &	73.0	&	3.5	&	524	&	1.0	&	\textbf{541}	&	\textit{\color{black}t.l.}	&	23.6	&	531	&	\textit{\color{black}t.l.}	&	25.9	\\
hamming10-4.clq	&	1,024	&	434,176	&	849	&	144	& {132.8} &	148.1	&	8.3	&	144	&	\textit{\color{black}t.l.}	&	144	&	\textit{\color{black}t.l.}	&	611.1	&	\textbf{157}	&	\textit{\color{black}t.l.}	&	552.2	\\
hamming8-2.clq	&	256	&	31,616	&	248	&	132	& {129.5} &	2.2	&	8.3	&	134	&	$<$0.1	&	\textbf{144}	&	\textit{\color{black}t.l.}	&	11.1	&	\textbf{144}	&	\textit{\color{black}t.l.}	&	10.6	\\
hamming8-4.clq	&	256	&	20,864	&	164	&	42	& {37.5} &	3.0	&	12.5	&	47	&	\textit{\color{black}t.l.}	&	42	&	\textit{\color{black}t.l.}	&	257.3	&	\textbf{48}	&	\textit{\color{black}t.l.}	&	212.6	\\
johnson32-2-4.clq	&	496	&	107,880	&	436	&	34	& {30.6} &	20.8	&	15.0	&	\textbf{40}	&	3.0	&	34	&	\textit{\color{black}t.l.}	&	771.3	&	30	&	\textit{\color{black}t.l.}	&	887.5	\\
keller5.clq	&	776	&	225,990	&	565	&	\textbf{124}	& {104.7} &	73.9	&	0.0	&	\textbf{124}	&	\textit{\color{black}t.l.}	&	\textbf{124}	&	\textit{\color{black}t.l.}	&	525.8	&	89	&	\textit{\color{black}t.l.}	&	771.9	\\
keller6.clq	&	3,361	&	4,619,898	&	2,696	&	\textbf{332}	& {261.4} &	5554.9	&	0.0	&	\textit{\color{black}n/a}	&	\textit{\color{black}n/a}	&	\textit{\color{black}n/a}	&	\textit{\color{black}n/a}	&	\textit{\color{black}n/a}	&	\textit{\color{black}n/a}	&	\textit{\color{black}n/a}	&	\textit{\color{black}n/a}	\\ 
MANN\_a27.clq	&	378	&	70,551	&	365	&	\textbf{144}	& {141.8} &	6.4	&	0.0	&	\textbf{144}	&	$<$0.1	&	\textbf{144}	&	3.0	&	-	&	\textbf{144}	&	7.0	&	-	\\
MANN\_a45.clq	&	1,035	&	533,115	&	1,013	&	\textbf{375}	& {372.2} &	89.2	&	0.0	&	\textbf{375}	&	1.0	&	\textbf{375}	&	16.0	&	-	&	\textbf{375}	&	16.0	&	-	\\
MANN\_a81.clq	&	3,321	&	5,506,380	&	3,281	&	\textbf{1,161}	& {1157.2} &	2617.8	&	0.0	&	\textbf{1,161}	&	$<$0.1	&	\textbf{1,161}	&	75.0	&	-	&	\textbf{1,161}	&	80.0	&	-	\\
p\_hat1000-1.clq	&	1,000	&	122,253	&	298	&	\textbf{98}	& {91.4} &	68.6	&	0.0	&	\textit{\color{black}n/a}	&	\textit{\color{black}n/a}	&	\textit{\color{black}n/a}	&	\textit{\color{black}n/a}	&	\textit{\color{black}n/a}	&	\textit{\color{black}n/a}	&	\textit{\color{black}n/a}	&	\textit{\color{black}n/a}	\\
p\_hat1000-2.clq	&	1,000	&	244,799	&	496	&	\textbf{196}	& {187.2} &	175.2	&	0.0	&	\textit{\color{black}n/a}	&	\textit{\color{black}n/a}	&	\textit{\color{black}n/a}	&	\textit{\color{black}n/a}	&	\textit{\color{black}n/a}	&	\textit{\color{black}n/a}	&	\textit{\color{black}n/a}	&	\textit{\color{black}n/a}	\\
p\_hat1000-3.clq	&	1,000	&	371,746	&	694	&	\textbf{271}	& {257.0} &	149.1	&	0.0	&	\textbf{271}	&	\textit{\color{black}t.l.}	&	\textbf{271}	&	\textit{\color{black}t.l.}	&	269.0	&	\textit{\color{black}n/a}	&	\textit{\color{black}n/a}	&	\textit{\color{black}n/a}	\\
p\_hat1500-1.clq	&	1,500	&	284,923	&	457	&	\textbf{138}	& {130.5} &	202.9	&	0.0	&	\textit{\color{black}n/a}	&	\textit{\color{black}n/a}	&	\textit{\color{black}n/a}	&	\textit{\color{black}n/a}	&	\textit{\color{black}n/a}	&	\textit{\color{black}n/a}	&	\textit{\color{black}n/a}	&	\textit{\color{black}n/a}	\\
p\_hat1500-2.clq	&	1,500	&	568,960	&	760	&	\textbf{290}	& {277.9} &	545.2	&	0.0	&	\textit{\color{black}n/a}	&	\textit{\color{black}n/a}	&	\textit{\color{black}n/a}	&	\textit{\color{black}n/a}	&	\textit{\color{black}n/a}	&	\textit{\color{black}n/a}	&	\textit{\color{black}n/a}	&	\textit{\color{black}n/a}	\\
p\_hat1500-3.clq	&	1,500	&	847,244	&	1,051	&	\textbf{391}	& {375.1} &	458.7	&	0.0	&	\textit{\color{black}n/a}	&	\textit{\color{black}n/a}	&	\textit{\color{black}n/a}	&	\textit{\color{black}n/a}	&	\textit{\color{black}n/a}	&	\textit{\color{black}n/a}	&	\textit{\color{black}n/a}	&	\textit{\color{black}n/a}	\\
p\_hat300-1.clq	&	300	&	10,933	&	91	&	\textbf{39}	&	 {35.1} & 3.6	&	0.0	&	\textbf{39}	&	\textit{\color{black}t.l.}	&	\textbf{39}	&	\textit{\color{black}t.l.}	&	623.1	&	32	&	\textit{\color{black}t.l.}	&	837.5	\\
p\_hat300-2.clq	&	300	&	21,928	&	149	& 	\textbf{71}	& {66.4} &	7.3	&	0.0	&	\textbf{71}	&	\textit{\color{black}t.l.}	&	\textbf{71}	&	\textit{\color{black}t.l.}	&	148.2	&	69	&	\textit{\color{black}t.l.}	&	157.1	\\
p\_hat300-3.clq	&	300	&	33,390	&	209	&	95	& {90.1} &	6.3	&	15.9	&	107	&	\textit{\color{black}t.l.}	&	95	&	\textit{\color{black}t.l.}	&	100.3	&	\textbf{113}	&	\textit{\color{black}t.l.}	&	67.4	\\
p\_hat500-1.clq	&	500	&	31,569	&	152	&	\textbf{57}	&	 {53.2} & 11.6	&	0.0	&	\textbf{57}	&	\textit{\color{black}t.l.}	&	\textbf{57}	&	\textit{\color{black}t.l.}	&	773.7	&	49	&	\textit{\color{black}t.l.}	&	920.4	\\
p\_hat500-2.clq	&	500	&	62,946	&	252	&	\textbf{114}	& {105.8} &	27.9	&	0.0	&	\textbf{114}	&	\textit{\color{black}t.l.}	&	\textbf{114}	&	\textit{\color{black}t.l.}	&	338.6	&	95	&	\textit{\color{black}t.l.}	&	426.3	\\
p\_hat500-3.clq	& 	500	&	93,800	&	351	&	\textbf{151}	& {143.3} &	24.5	&	0.0	&	\textbf{151}	&	\textit{\color{black}t.l.}	&	\textbf{151}	&	\textit{\color{black}t.l.}	&	231.1	&	140	&	\textit{\color{black}t.l.}	&	257.1	\\
p\_hat700-1.clq	&	700	&	60,999	&	208	&	\textbf{74}	& {69.2} &	27.4	&	0.0	&	\textbf{74}	&	\textit{\color{black}t.l.}	&	\textbf{74}	&	\textit{\color{black}t.l.}	&	846.0	&	\textit{\color{black}n/a}	&	\textit{\color{black}n/a}	&	\textit{\color{black}n/a}	\\
p\_hat700-2.clq	&	700	&	121,728	&	353	&	\textbf{153}	& {141.6} &	66.5	&	0.0	&	\textbf{153}	&	\textit{\color{black}t.l.}	&	\textbf{153}	&	\textit{\color{black}t.l.}	&	357.5	&	\textit{\color{black}n/a}	&	\textit{\color{black}n/a}	&	\textit{\color{black}n/a}	\\
p\_hat700-3.clq	&	700	&	183,010	&	487	&	\textbf{201}	& {190.9} &	57.6	&	0.0	&	\textbf{201}	&	\textit{\color{black}t.l.}	&	\textbf{201}	&	\textit{\color{black}t.l.}	&	248.3	&	175	&	\textit{\color{black}t.l.}	&	300.0	\\
san1000.clq	&	1,000	&	250,500	&	514	&	\textbf{69}	& {59.9} &	103.6	&	0.0	&	\textit{\color{black}n/a}	&	\textit{\color{black}n/a}	&	\textit{\color{black}n/a}	&	\textit{\color{black}n/a}	&	\textit{\color{black}n/a}	&	\textit{\color{black}n/a}	&	\textit{\color{black}n/a}	&	\textit{\color{black}n/a}	\\
san200\_0.7\_1.clq	&	200	&	13,930	&	138	&	61	& {53.7} &	2.1	&	25.6	&	\textbf{82}	&	\textit{\color{black}t.l.}	&	63	&	\textit{\color{black}t.l.}	&	89.2	&	65	&	\textit{\color{black}t.l.}	&	83.3	\\
san200\_0.7\_2.clq	&	200	&	13,930	&	134	&	44	& {37.9} &	2.6	&	26.7	&	\textbf{60}	&	\textit{\color{black}t.l.}	&	48	&	\textit{\color{black}t.l.}	&	109.0	&	48	&	\textit{\color{black}t.l.}	&	109.7	\\
san200\_0.9\_1.clq	&	200	&	17,910	&	173	&	93	& {87.8} &	1.9	&	11.4	&	96	&	$<$0.1	&	\textbf{105}	&	\textit{\color{black}t.l.}	&	4.5	&	\textbf{105}	&	\textit{\color{black}t.l.}	&	4.5	\\
san200\_0.9\_2.clq	&	200	&	17,910	&	175	&	98	& {89.2} &	1.9	&	6.7	&	100	&	$<$0.1	&	\textbf{105}	&	\textit{\color{black}t.l.}	&	15.8	&	\textbf{105}	&	\textit{\color{black}t.l.}	&	15.9	\\
san200\_0.9\_3.clq	&	200	&	17,910	&	176	&	93	& {82.6} &	1.8	&	9.7	&	100	&	1.0	&	\textbf{103}	&	\textit{\color{black}t.l.}	&	19.4	&	\textbf{103}	&	\textit{\color{black}t.l.}	&	19.4	\\
san400\_0.5\_1.clq	&	400	&	39,900	&	204	&	\textbf{41}	& {34.9} &	10.0	&	0.0	&	\textbf{41}	&	\textit{\color{black}t.l.}	&	\textbf{41}	&	\textit{\color{black}t.l.}	&	875.6	&	37	&	\textit{\color{black}t.l.}	&	981.1	\\
san400\_0.7\_1.clq	&	400	&	55,860	&	277	&	94	& {86.3} &	11.3	&	16.8	&	\textbf{113}	&	\textit{\color{black}t.l.}	&	94	&	\textit{\color{black}t.l.}	&	163.0	&	101	&	\textit{\color{black}t.l.}	&	145.2	\\
san400\_0.7\_2.clq	&	400	&	55,860	&	277	&	87	& {77.9} &	13.0	&	19.4	&	\textbf{108}	&	\textit{\color{black}t.l.}	&	87	&	\textit{\color{black}t.l.}	&	185.4	&	78	&	\textit{\color{black}t.l.}	&	220.4	\\
san400\_0.7\_3.clq	&	400	&	55,860	&	274	&	\textbf{80}	& {70.8} &	14.8	&	0.0	&	\textbf{80}	&	\textit{\color{black}t.l.}	&	\textbf{80}	&	\textit{\color{black}t.l.}	&	209.5	&	78	&	\textit{\color{black}t.l.}	&	217.5	\\
san400\_0.9\_1.clq	&	400	&	71,820	&	353	&	190	& {172.3} &	9.1	&	6.4	&	200	&	1.0	&	\textbf{203}	&	\textit{\color{black}t.l.}	&	24.0	&	\textbf{203}	&	\textit{\color{black}t.l.}	&	24.7	\\
sanr200\_0.7.clq	&	200	&	13,868	&	137	&	60	& {55.6} &	2.2	&	10.4	&	\textbf{67}	&	\textit{\color{black}t.l.}	&	63	&	\textit{\color{black}t.l.}	&	95.2	&	64	&	\textit{\color{black}t.l.}	&	91.9	\\
sanr200\_0.9.clq	&	200	&	17,863	&	175	&	92	& {85.4} &	1.8	&	10.7	&	100	&	$<$0.1	&	\textbf{103}	&	\textit{\color{black}t.l.}	&	20.4	&	\textbf{103}	&	\textit{\color{black}t.l.}	&	20.4	\\
sanr400\_0.5.clq	&	400	&	39,984	&	201	&	\textbf{74}	& {68.8} &	8.4	&	0.0	&	\textbf{74}	&	\textit{\color{black}t.l.}	&	\textbf{74}	&	\textit{\color{black}t.l.}	&	440.5	&	\textit{\color{black}n/a}	&	\textit{\color{black}n/a}	&	\textit{\color{black}n/a}	\\
sanr400\_0.7.clq	&	400	&	55,869	&	276	&	\textbf{105}	& {98.9} &	10.3	&	0.0	&	\textbf{105}	&	\textit{\color{black}t.l.}	&	\textbf{105}	&	\textit{\color{black}t.l.}	&	139.8	&	\textit{\color{black}n/a}	&	\textit{\color{black}n/a}	&	\textit{\color{black}n/a}	\\ \hline
\# best & & & & {34/64} & & & & {37/64} & & {33/64} & & & {20/64}  & &  \\
\# opt & & & & & & & & & & & & {5/64} & & & {3/64} \\ 
 \hline
\end{longtable}
\end{landscape}

The results for IP on larger instances show once more that initial solutions provided by MSBCOL are very relevant, considering that both approaches that use those solutions as warm start, i.e. MSBCOL$^+$ and MSBCOL$^*$, reported higher numbers of best known solution values. IP returned 9 out of 32 (28.1\%) best known results for graph coloring instances, and 21 out of 64 (32.8\%) for maximum clique instances. Besides, the number of optimal solutions reported by IP is 1 for graph coloring instances (3.1\%), and 3 for maximum clique instances (4.7\%).

Generally speaking, the results in this section have reinforced the effectiveness of MSBCOL, as the approach obtained several best known values and reported reasonable running times even for very large graphs. Besides, MSBCOL was the only method that reported solutions for numerous instances, as can be seen in dsjc1000.1.col, as an example. Nevertheless, solutions generated by MSBCOL still have room for improvements, as results by MSBCOL$^+$ and MSBCOL$^*$ have shown. We also point out that a few optimal solutions reported by MSBCOL$^*$ and IP largely differ from the upper bound $m(G)$ (see MANN\_a27.clq, MANN\_a45.clq, and MANN\_a81.clq), however an in depth analysis is out of the scope of this work and can be an interesting direction for future theoretical works regarding lower bounds on the $b$-chromatic number. For the large set of instances, the heuristic methods (MSBCOL and MSBCOL$^+$) outperformed the exact ones (MSBCOL$^*$ and IP), which supports the idea of using MSBCOL to provide initial solutions to more advanced metaheuristics, such as to heuristically adapt MSBCOL$^+$ in a combinatorial local search strategy.

\subsection{Comparison with a state-of-the-art metaheuristic}\label{sec:compare}

In this subsection, we compare the solutions obtained by our newly proposed approach MSBCOL with the best ones reported in the literature using a state-of-the-art approach, namely the hybrid evolutionary algorithm of \citeA{FisPetMerCre15} (denoted henceforth as HEA). We choose to analyze in this subsection only MSBCOL rather than the complete matheuristic approach in order to establish this metaheuristic as a robust and effective method, considering that both MSBCOL and HEA can be classified as pure metaheuristics. The authors tested their algorithm on a set of small instances composed of $d$-regular graphs with up to 12 vertices and on nine large graphs from the second DIMACS implementation challenge (which is a subset of the large benchmark set described in Subsection~\ref{sec:instancias}). As the small instance set used by the authors only includes extremely small graphs and is not publicly available, we do not report results for that set.

Table~\ref{table:compare} compares the results obtained by MSBCOL and HEA for all nine large graphs tested in \citeA{FisPetMerCre15}. We remark that the results for these instances using all the approaches proposed in our paper were already presented in Subsection~\ref{sec:large}, as these nine instances represent a subset of the bencmark set described in Subsection~\ref{sec:instancias}.
The first column identifies the instance. Columns 2 to 4 report the number of vertices ($|V|$), the number of edges ($|E|$), and the solution upper bound ($m(G)$).  Columns 5 to 8 give, for MSBCOL, the best solution value ($z_M$), the average solution value for the executed number of iterations ($z_{avg}$), the running time in seconds (time(s)), and the percentual improvement over HEA ($imp_{M}$), calculated as $100\times \frac{z_{M} - z_{H}}{z_H}$ with $z_H$ being the best solution encountered by HEA. Columns 9 and 10 give, for HEA, the solution value ($z_H$), and the running time in seconds (time(s)). The best solution values are shown in bold. We point out that the authors did not report the machine used in the experiments for HEA, therefore the main goal in this subsection is to compare the quality of the solutions obtained using the two approaches. 

\begin{table}[H]
\centering
\small
\caption{Results comparing MSBCOL and HEA for a subset of the instances containing nine large graphs.} 
\begin{tabular}{lccc|cccc|cc}
\hline
\multicolumn{4}{l|}{Instance name}      & \multicolumn{4}{c|}{MSBCOL} &  \multicolumn{2}{c}{HEA~\cite{FisPetMerCre15}}        \\

& $|V|$  & $|E|$    & $m(G)$ & $z_M$ & {$z_{avg}$} &  time(s) & imp$_{M}(\%)$ & $z_H$ & time(s)\footnotemark \\
\hline
dsjc250.5.col	&	250	&	15,668	&	126	&	\textbf{51}	& {46.9} &	2.8	&	2.0	&	50	&	186.8	\\
dsjc500.1.col	&	500	&	12,458	&	59	&	\textbf{29}	& {25.6} &	2.9	&	16.0	&	25	&	113.6	\\
dsjc500.5.col	&	500	&	62,624	&	251	&	\textbf{88}	& {82.8} &	14.1	&	4.8	&	84	&	4506.9	\\
dsjr500.5.col	&	500	&	58,862	&	234	&	\textbf{183} & {177.1}	&	24.3	&	10.2	&	166	&	15017.0	\\
flat300\_28\_0.col	&	300	&	21,695	&	146	&	\textbf{57}	& {52.8} &	3.9	&	3.6	&	55	&	1048.0	\\
flat1000\_50\_0.col	&	1,000	&	245,000	&	492	&	\textbf{153} & {144.1}	&	69.7	&	10.1	&	139	&	58428.0	\\
le450\_25c.col	&	450	&	17,343	&	101	&	\textbf{48}	& {44.6} &	8.7	&	6.7	&	45	&	571.7	\\
le450\_25d.col	&	450	&	17,425	&	99	&	\textbf{48}	& {44.9} &	7.2	&	6.7	&	45	&	256.4	\\
r250.5.col	&	250	&	14,849	&	119	&	\textbf{100} & {94.9}	&	3.7	&	8.7	&	92	&	1836.2	\\
\hline
\end{tabular}
\label{table:compare}
\end{table}

 The reported values show that MSBCOL clearly outperformed HEA for all nine large instances considered in \citeA{FisPetMerCre15}. MSBCOL was able to obtain strictly better solutions for all of them, representing an 100.00\% success rate on improving over the previously best known solutions presented by HEA. The most notable performance can be seen in instances dsjr500.1.col, dsjr500.5.col and flat1000\_50\_0.col, for which MSBCOL achieved improvements over 10.00\% when compared to HEA. {Results also show that even some of the average solution values were able to improve over the previously best known solution values presented by HEA, since 4 out of 9 (44.4\%) outperform HEA's results (see dsjc500.1.col, dsjr500.5.col, flat1000\_50\_0.col, and r250.5.col).}
 
 It is noteworthy that MSBCOL was very effective when it comes to the running times, as they were below a minute for all but one instance. The maximum running time was less than 70.0 seconds for the largest instance (flat1000\_50\_0.col), which is composed of 1,000 vertices and 245,000 edges.

\footnotetext{The authors did not report the computational resources used for the experiments.}

\section{{Concluding} remarks}
\label{sec:finalremarks}

In this paper, we considered the $b$-coloring problem and proposed: the first integer programming formulation for the optimization variant of the problem, which consists in maximizing the number of colors used in a proper $b$-coloring; a {multi-start multi-greedy randomized metaheuristic}, which differs from previous {(meta)}heuristics by taking into account the structure of the problem in its mechanism; and a very effective {matheuristic approach} combining the multi-start multi-greedy randomized {meta}heuristic with a fix-and-optimize {local search procedure} using the proposed integer programming formulation. Moreover, we also proposed a benchmark set of instances to be used in future works.

Computational experiments were performed on a newly proposed benchmark set to analyze the performance of the presented techniques. The multi-greedy randomized heuristic has shown to be very effective while having very few parameters to be configured.
The integer programming formulation was able to provide satisfactory results, but it is considerably compromised as the instance size grows, considering that the number of variables have a tendency to become intractable leading to memory overflow. The fix-and-optimize local search procedure used in the matheuristic approach improved a significant amount of solutions and reported the majority of best results, demonstrating to be a very effective and promising method for the $b$-coloring and other related problems. The results have also shown that the proposed multi-start metaheuristic outperforms a state-of-the-art evolutionary algorithm for a subset of the instances, namely, all nine large instances which were considered in \citeA{FisPetMerCre15}.
Last but not least, the proposed benchmark set features a variety of instances including small and large graphs with different characteristics, which can be used in future computational experiments to verify the performance of both exact and heuristic approaches for the $b$-coloring problem.

Relevant research directions include the development of combinatorial local search approaches to overcome the memory limitations of the used large formulations. Such combinatorial local search approaches could be used together with the approaches proposed in our work in advanced metaheuristic frameworks. Another interesting possible direction is a polyhedral study of formulations for the $b$-coloring problem.

\vspace{0.8cm}

{
\noindent \small 
\textbf{Acknowledgments:} Work of Rafael A. Melo was supported by Universidade Federal da Bahia; the Brazilian Ministry of Science, Technology, Innovation and Communication (MCTIC); the State of Bahia Research Foundation (FAPESB); and the Brazilian National Council for Scientific and Technological Development (CNPq). Work of Michell F. Queiroz was partially supported by a CAPES scholarship.  Work of Marcio C. Santos was supported by Universidade Federal do Ceará. {The authors would like to thank the editor and the anonymous reviewers for the valuable comments which helped to improve the quality of this paper.}

}

\bibliographystyle{apacite}
\bibliography{main_R4}
 
\end{document}